\newtheorem{theorem}{\textbf{Theorem}}[section]
\newtheorem{assumption}[theorem]{\textbf{Assumption}}
\newtheorem{lemma}[theorem]{\textbf{Lemma}}
\newtheorem{remark}[theorem]{\textbf{Remark}}
\newtheorem{proposition}[theorem]{\textbf{Proposition}}
\newtheorem{definition}[theorem]{\textbf{Definition}}
\newtheorem{corollary}[theorem]{\textbf{Corollary}}
\newcommand{\argmax}{\mathop{\operatorname{argmax}}}
\long\def\symbolfootnote[#1]#2{\begingroup%
\def\thefootnote{\fnsymbol{footnote}}\footnote[#1]{#2}\endgroup}
\newcommand{\EE}{\mathcal{E}}
\newcommand{\GG}{\mathcal{G}}
\newcommand{\LL}{\mathcal{L}}
\newcommand{\nbrs}{\mathcal{N}}
\newcommand{\until}[1]{\{1,\ldots,#1\}} 
\newcommand{\subj}{\text{subj. to}}
\newcommand\oprocendsymbol{\hbox{$\square$}}
\newcommand\oprocend{\relax\ifmmode\else\unskip\hfill\fi\oprocendsymbol}
\def\eqoprocend{\tag*{$\square$}}
\newcommand{\real}{{\mathbb{R}}}
\newcommand{\1}{\mathbf{1}}
\newcommand{\map}[3]{#1: #2 \rightarrow #3}
\newcommand{\sz}[1]{\mathbf{z}^{#1}}
\newcommand{\sx}[1]{\mathbf{x}^{#1}}
\newcommand{\barsx}[1]{\bar{\mathbf{x}}^{#1}}
\newcommand{\smu}[1]{\boldsymbol{\mu}^{#1}}
\newcommand{\slambda}[1]{\boldsymbol{\lambda}^{#1}}
\newcommand{\srho}[1]{\rho^{#1}}
\newcommand{\slotIndex}{s}
\newcommand{\slotUB}{S}
\def \Dminmaxfull/{Distributed Duality-Based Peak Minimization (DDPM)}
\def \Dminmax/{DDPM}
\renewcommand{\inf}{\operatornamewithlimits{inf\vphantom{p}}}
\renewcommand{\lim}{\operatornamewithlimits{lim\vphantom{p}}}
\newcommand{\StatexIndent}[1][3]{%
  \setlength\@tempdima{\algorithmicindent}%
  \Statex\hskip\dimexpr#1\@tempdima\relax}
\begin{document}

\title{
  A duality-based approach for\\distributed min-max optimization
  }

  \author{Ivano Notarnicola$^1$, Mauro Franceschelli$^2$, Giuseppe
    Notarstefano$^1$ \thanks{A preliminary short version of this paper is going
      to appear as \cite{notarnicola2016duality}. The research leading to these
      results has received funding from the European Research Council (ERC)
      under the European Union's Horizon 2020 research and innovation programme
      (grant agreement No 638992 - OPT4SMART) and from the Italian grant SIR
      ``Scientific Independence of young Researchers'', project CoNetDomeSys,
      code RBSI14OF6H, funded by the Italian Ministry of Research and Education
      (MIUR).  } \thanks{ $^1$Ivano Notarnicola and Giuseppe Notarstefano are
      with the Department of Engineering, Universit\`a del Salento, Via
      Monteroni, 73100 Lecce, Italy, \texttt{name.lastname@unisalento.it.}  }
    \thanks{$^2$Mauro Franceschelli is with the Department of Electrical and
      Electronic Engineering, University of Cagliari, Piazza D'Armi, 09123
      Cagliari, Italy, \texttt{mauro.franceschelli@diee.unica.it.} } }

\maketitle

\begin{abstract}
  In this paper we consider a distributed optimization scenario in which a set
  of processors aims at cooperatively solving a class of min-max optimization
  problems. This set-up is motivated by peak-demand minimization problems in
  smart grids. Here, the goal is to minimize the peak value over a finite
  horizon with: (i) the demand at each time instant being the sum of
  contributions from different devices, and (ii) the device states at different
  time instants being coupled through local constraints (e.g., the
  dynamics). The min-max structure and the double coupling (through the devices
  and over the time horizon) makes this problem challenging in a distributed
  set-up (e.g., existing distributed dual decomposition approaches cannot be
  applied). We propose a distributed algorithm based on the combination of
  duality methods and properties from min-max optimization. Specifically, we
  repeatedly apply duality theory and properly introduce ad-hoc slack variables
  in order to derive a series of equivalent problems. On the resulting problem
  we apply a dual subgradient method, which turns out to be a distributed
  algorithm consisting of a minimization on the original primal variables and a
  suitable dual update. We prove the convergence of the proposed algorithm in
  objective value. Moreover, we show that every limit point of the primal
  sequence is an optimal (feasible) solution. Finally, we provide numerical
  computations for a peak-demand optimization problem in a network of
  thermostatically controlled loads.
\end{abstract}

\section{Introduction}
\label{sec:intro}
Distributed optimization problems arise as building blocks of several network
problems in different areas as, e.g., control, estimation and learning.
On this regard, the addition of processing, measurement, communication and
control capability to the electric power grid is leading to ``smart grids'' in which
tasks, that were typically performed at a central level, can be more efficiently
performed by smart devices in a cooperative way.
Therefore, these complex systems represent a rich source of motivating
optimization scenarios.
An interesting example is the design of smart generators, accumulators and loads
that cooperatively execute Demand Side Management (DSM)
programs~\cite{alizadeh2012demand}. The goal is to reduce the hourly and daily
variations and peaks of electric demand by optimizing generation, storage and
consumption.
A widely adopted objective in DSM programs is Peak-to-Average Ratio (PAR),
defined as the ratio between peak-daily and average-daily power demands. PAR
minimization gives rise to a min-max optimization problem if the average daily
electric load is assumed not to be affected by the demand response strategy.

This problem has been already investigated in the literature in a noncooperative
framework. In~\cite{mohsenian2010autonomous} the
authors 
propose a game-theoretic model for PAR minimization and provide a distributed
energy-cost-based strategy for the users which is proven to be optimal.
A noncooperative-game approach is also proposed in~\cite{atzeni2013demand},
where optimal strategies are characterized and a distributed scheme is designed
based on a proximal decomposition algorithm. 
It is worth pointing out that in the literature above the term ``distributed'' is
used to indicate that data are deployed on a set of devices, which
perform local computation simultaneously. However, the nodes do not run a
``distributed algorithm'', that is they do not cooperate and do not exchange
information locally over a communication graph.

Motivated by this application scenario, in this paper we propose a novel
distributed optimization framework for min-max optimization problems commonly
found in DSM problems.
Differently from the references above, we consider a cooperative, distributed
computation model in which the agents in the network solve the optimization
problem (i) without any knowledge of aggregate quantities, (ii) by communicating
only with neighboring agents, and (iii) by performing local computations (with
no central coordinator).

The distributed algorithm proposed in the paper heavily relies on duality theory.
Duality is a widely used tool for parallel and (classical) distributed
optimization algorithms as shown, e.g., in the
tutorials~\cite{palomar2006tutorial,yang2010distributed}.
More recently, in \cite{zhu2012distributed} a distributed, consensus-based,
primal-dual algorithm is proposed to solve constrained optimization problems
with separable convex costs and common convex constraints.
In~\cite{chang2014distributed} the authors use the same technique to solve
optimization problems with coupled smooth convex costs and convex inequality
constraints. In the proposed algorithm, agents employ a
consensus technique to estimate the global cost and constraint functions
and use a local primal-dual perturbed subgradient method to obtain a global optimum.
These approaches do not apply to optimization problems as the one considered in
this paper.

Primal recovery is a key issue in dual methods, since the
primal sequence is not guaranteed, in general, to satisfy the dualized primal
constraint. Thus, several strategies have been proposed to cope with this issue.
In~\cite{nedic2009approximate}, the authors propose and analyze a centralized
algorithm for generating approximate primal solutions via a dual subgradient
method applied to a convex constrained optimization problem. Moreover, in the
paper the problem of (exact) primal recovery and rate analysis of existing
techniques is widely discussed.
In \cite{lu2013convergence}, still in a centralized set-up, the primal convergence rate
of dual first-order methods is studied when the primal problem is only
approximately solved.
In \cite{simonetto2016primal} a distributed algorithm is proposed to generate
approximate dual solutions for a problem with separable cost function and
coupling constraints. 
A similar optimization set-up is considered in \cite{falsone2016dual} in a
distributed set-up. A dual decomposition approach combined with a proximal
minimization is proposed to generate a dual solution. 
In the last two papers, a primal recovery mechanism is proposed to obtain a
primal optimal solution.

%
Another tool used to develop and analyze the distributed algorithm we propose in
the paper is min-max optimization, which is strictly related to saddle-point
problems. In~\cite{nedic2009subgradient} the authors propose a subgradient
method to generate approximate saddle-points.
A min-max problem is also considered in~\cite{srivastava2011distributed} and a
distributed algorithm based on a suitable penalty approach has been
proposed.
Differently from our set-up, in \cite{srivastava2011distributed} each term of
the max-function is local and entirely known by a single agent.
Another class of algorithms exploits the exchange of active constraints among
the network nodes to solve constrained optimization problems which include
min-max problems,
\cite{notarstefano2011distributed,burger2014polyhedral}. Although they work
under asynchronous, directed communication they do not scale in set-ups as the
one in this paper, in which the terms of the max function are coupled.
Very recently, in~\cite{mateos2015distributed} the authors proposed a
distributed projected subgradient method to solve constrained saddle-point
problems with agreement constraints. The proposed algorithm is based on
saddle-point dynamics with Laplacian averaging.
Although our problem set-up fits in those considered in~\cite{mateos2015distributed},
our algorithmic approach and the analysis are different.
In~\cite{simonetto2012regularized,koppel2015regret} saddle point dynamics are
used to design distributed algorithms for standard separable optimization
problems.


%

The main contributions of this paper are as follows. First, we propose a novel
distributed optimization framework which is strongly motivated by peak
power-demand minimization in DSM. The optimization problem has a min-max
structure with local constraints at each node. Each term in the max function
represents a daily cost (so that the maximum over a given horizon needs to be
minimized), while the local constraints are due to the local dynamics and
state/input constraints of the subsystems in the smart grid.
The problem is challenging when approached in a distributed way since it is
\emph{doubly coupled}. Each term of the max function is coupled among the
agents, since it is the sum of local functions each one known by the local agent
only. Moreover, the local constraints impose a coupling between different
``days'' in the time-horizon.
The goal is to solve the problem in a distributed computation framework, in
which each agent only knows its local constraint and its local objective
function at each day.

Second, as main paper contribution, we propose a distributed algorithm to solve
this class of min-max optimization problems.
The algorithm has a very simple and clean structure in which a primal
minimization and a dual update are performed. The primal problem has a similar
structure to the centralized one.
Despite this simple structure, which resembles standard distributed dual
methods, the algorithm is not a standard decomposition scheme \cite{yang2010distributed},
and the derivation of the algorithm is non-obvious.
Specifically, the algorithm is derived by heavily resorting to duality theory
and properties of min-max optimization (or saddle-point) problems. In
particular, a sequence of equivalent problems is derived in order to decompose
the originally coupled problem into locally-coupled subproblems, and
thus being able to design a distributed algorithm.
An interesting feature of the algorithm is its expression in terms of the
original primal variables and of dual variables arising from two different
(dual) problems.  Since we apply duality more than once, and on different
problems, this property, although apparently intuitive, was not obvious a
priori.
Another appealing feature of the algorithm is that every limit point of the primal
sequence at each node is a (feasible) optimal solution of the original optimization
problem (although this is only convex and not strictly convex). This property is
obtained by the minimizing sequence of the local primal subproblems without
resorting to averaging schemes. Finally, since each
node only computes the decision variable of interest, our algorithm can solve both
large-scale (many agents are present) and big-data (a large horizon is considered)
problems.

The paper is structured as follows. In Section~\ref{sec:distributed_algorithm}
we formalize our distributed min-max optimization set-up and present the main
contribution of the paper, a novel, duality-based distributed optimization
method. In Section~\ref{sec:analysis} we characterize its convergence
properties.  In Section \ref{sec:simulations} we corroborate the theoretical
results with a numerical example involving peak power minimization in a
smart-grid scenario. Finally, in Appendix we provide some useful preliminaries
from optimization, specifically basics on duality theory and a result for the
subgradient method.


\section{Problem Set-up and Distributed\\Optimization Algorithm}
\label{sec:distributed_algorithm}
In this section we set-up the distributed min-max optimization framework and
propose a novel distributed algorithm to solve it.

\subsection{Distributed min-max optimization set-up}
\label{sec:setup}
We consider a network of $N$ processors which communicate according to a
\emph{connected}, undirected graph $\GG = (\until{N}, \EE)$, where
$\EE\subseteq \until{N} \times \until{N}$ is the set of edges. That is, the edge
$(i,j)$ models the fact that node $i$ and $j$ exchange information.  We denote
by $\nbrs_i$ the set of \emph{neighbors} of node $i$ in the fixed graph $\GG$,
i.e., $\nbrs_i := \{ j \in \until{N} \mid (i,j) \in \EE \}$. Also, we denote by
$a_{ij}$ the element $i,j$ of the adjacency matrix. We recall that $a_{ij} = 1$
if $(i,j) \in \EE$ and $i\neq j$, and $a_{ij} = 0$ otherwise.

Next, we introduce the min-max optimization problem to be solved by the network processors in a
distributed way.
Specifically, we associate to each processor $i$ a decision vector
$\sx{i} = [ \sx{i}_1, \ldots, \sx{i}_\slotUB ]^\top \in \real^\slotUB$, a
constraint set $X^i\subseteq \real^\slotUB$ and local functions
$g^i_\slotIndex$, $\slotIndex\in\until{\slotUB}$, and set-up the following
optimization problem
\begin{align}
\begin{split}
  \min_{\sx{1}, \ldots, \sx{N}} & \max_{\slotIndex \in \until{\slotUB}}
    \sum_{i=1}^N g^i_\slotIndex (\sx{i}_\slotIndex)
  \\
  \subj \: & \:\: \sx{i} \in X^i, \hspace{2cm} i\in\until{N}
\end{split}
\label{eq:minimax_starting_problem}
\end{align}
where for each $i\in\until{N}$ the set $X^i\subseteq\real^\slotUB$ is nonempty,
convex and compact, and the functions $\map{g^i_\slotIndex}{\real}{\real}$,
$\slotIndex\in\until{\slotUB}$, are convex.

Note that we use the superscript $i\in\until{N}$ to indicate that a vector
$\sx{i}\in\real^\slotUB$ belongs to node $i$, while we use the subscript $\slotIndex\in\until{\slotUB}$
to identify a vector component, i.e., $\sx{i}_\slotIndex$ is the $\slotIndex$-th
component of $\sx{i}$.




Using a standard approach for min-max problems, we introduce an auxiliary
variable $P$ to write the so called epigraph representation of
problem~\eqref{eq:minimax_starting_problem}, given by
\begin{align}
\begin{split}
  \min_{\sx{1}, \ldots, \sx{N}, P} \: & \: P
  \\
  \subj \: & \: \sx{i} \in X^i , \hspace{1.65cm} i\in\until{N}
  \\
  & \: \sum_{i=1}^N g^i_\slotIndex (\sx{i}_\slotIndex) \le P, \hspace{.5cm}  \slotIndex \in \until{\slotUB}.
\end{split}
\label{eq:starting_problem}
\end{align}

It is worth noticing that this problem has a particular structure, which gives
rise to interesting challenges in a distributed set-up. First of all, two types
of couplings are present, which involve simultaneously the $N$ agents and the
$S$ components of each decision variable $\sx{i}$. Specifically, for a given
index $s$, the constraint
$\sum_{i=1}^N g^i_\slotIndex (\sx{i}_\slotIndex) \le P$ couples all the vectors
$\sx{i}$, $i\in\until{N}$. At the same time, for a given $i\in\until{N}$, the
constraint $X^i$ couples all the components $\sx{i}_1, \ldots, \sx{i}_S$ of
$\sx{i}$. Figure~\ref{fig:cross_constraints} provides a nice graphical
representation of this interlaced coupling.  Moreover, the problem is both
large-scale and big-data. That is, both the number of decision variables and the
number of constraints depend on $N$ (and thus scale badly with the number of
agents in the network). Also, the dimension of the coupling constraint, $S$, can
be large. Therefore, common approaches as reaching a consensus among the nodes
on an optimal solution and/or exchanging constraints are not computationally
affordable.

\begin{figure}[!htbp]
  \centering
  \includegraphics[scale=0.96]{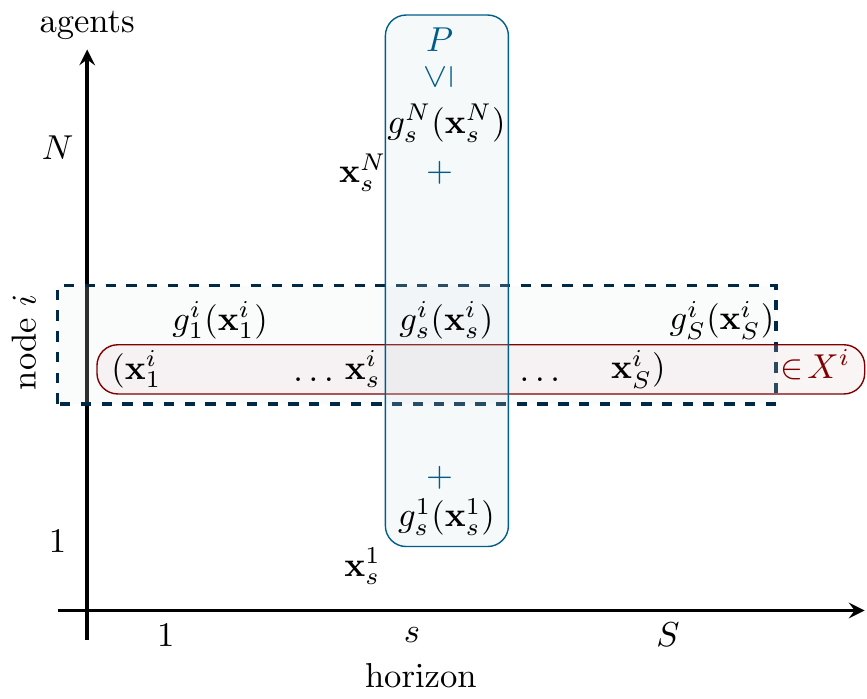}
  \caption{ Graphical representation of interlaced constraints.  }
  \label{fig:cross_constraints}
\end{figure}

To conclude this section, notice that problem~\eqref{eq:starting_problem} is
convex, but not strictly convex.
This means that it is not guaranteed to have a unique optimal
solution. As discussed in the introduction, this impacts on dual approaches when
trying to recover a primal optimal solution, see e.g.,
\cite{nedic2009approximate} and references therein.
%
This aspect is even more delicate in a distributed set-up in which nodes only
know part of the constraints and of the objective function.


\subsection{\Dminmaxfull/ }
\label{sec:alg_description}
Next, we introduce our distributed optimization algorithm.
Informally, the algorithm consists of a two-step procedure.  First, each node
$i\in\until{N}$ stores a set of variables (($\sx{i}$, $\srho{i}$), $\smu{i}$)
obtained as a primal-dual optimal solution pair of a local optimization problem
with an epigraph structure as the centralized problem. The coupling with the
other nodes in the original formulation is replaced by a term depending on
neighboring variables $\slambda{ij}$, $j\in\nbrs_i$. These variables are updated
in the second step according to a suitable linear law weighting the difference
of neighboring $\smu{i}$.
Nodes use a diminishing step-size denoted by $\gamma(t)$ and can initialize the
variables $\slambda{ij}$, $j\in\nbrs_i$ to arbitrary values.
In the next table we formally state our \Dminmaxfull/ algorithm from the
perspective of node $i$.
\begin{algorithm}
\renewcommand{\thealgorithm}{}
\floatname{algorithm}{Distributed Algorithm}

  \begin{algorithmic}[0]
    \Statex \textbf{Processor states}: $(\sx{i}, \srho{i})$, $\smu{i}$ and $\slambda{ij}$ for $j\in\nbrs_i$

    \Statex \textbf{Evolution}:

      \StatexIndent[0.5] \textbf{Gather} $ \slambda{ji}(t)$ from $j\in\nbrs_i$

      \StatexIndent[0.5] \textbf{Compute} $\big((\sx{i}(t+1),\srho{i}(t+1)),\smu{i}(t+1)\big)$ as a
        primal-dual optimal solution pair of
      \begin{align}
      \begin{split}
        \min_{\sx{i}, \srho{i}} \: & \: \srho{i}
        \\
        \subj \: & \: \sx{i} \in X^i
        \\
        & \: g^i_\slotIndex (\sx{i}_\slotIndex ) + \sum_{j\in\nbrs_i}  \big( \slambda{ij}(t) - \slambda{ji}(t) \big)_{\slotIndex} \le \srho{i},
        \\
        & \hspace{4.0cm}\slotIndex\in\until{\slotUB}
      \end{split}
      \label{eq:alg_minimization}
      \end{align}

      \StatexIndent[0.5] \textbf{Gather} $ \smu{j}(t+1)$ from $j\in\nbrs_i$

      \StatexIndent[0.5] \textbf{Update} for all $j\in\nbrs_i$
      \begin{align}
        \slambda{ij} (t \!+\! 1) = \slambda{ij}(t) - \gamma(t) (\smu{i}(t \!+\! 1) \!-\! \smu{j}(t \!+\! 1))
      \label{eq:alg_update}
      \end{align}

  \end{algorithmic}
  \caption{\Dminmax/}
  \label{alg:distributed_DSM}
\end{algorithm}

The structure of the algorithm and the meaning of the updates will be clear in
the constructive analysis carried out in the next section.
At this point we want to point out that although
problem~\eqref{eq:alg_minimization} has the same epigraph structure of
problem~\eqref{eq:starting_problem}, $\rho^i$ is not a copy of the centralized
cost $P$, but rather a local contribution to that cost. That is, as we will see,
the total cost $P$ will be the sum of the $\rho^i$s.


\section{Algorithm Analysis}
\label{sec:analysis}
The analysis of the proposed \Dminmax/ distributed algorithm is constructive
and heavily relies on duality theory tools.

We start by deriving the equivalent dual problem of~\eqref{eq:starting_problem}
which is formally stated in the next lemma.


\begin{lemma}
	The optimization problem
	\begin{align}
	\begin{split}
	  \max_{ \smu{} \in\real^\slotUB } \: & \: \sum_{i=1}^N q^i ( \smu{} )
	  \\
	  \subj \: & \: \1^\top \smu{} = 1, \: \smu{} \succeq 0,
	\end{split}
	\label{eq:dual_centr}
	\end{align}
  with $\1 := [1, \ldots, 1]^\top \in \real^\slotUB$ and
	\begin{align}
	  q^i( \smu{} ) & := \min_{\sx{i} \in X^i} \sum_{\slotIndex=1}^\slotUB
                          \smu{}_\slotIndex g^i_\slotIndex (\sx{i}_\slotIndex), 
	\label{eq:qi_definition}
	\end{align}
	for all $i\in\until{N}$, is the dual of
        problem~\eqref{eq:starting_problem}.

  Moreover, both problems~\eqref{eq:starting_problem} and \eqref{eq:dual_centr}
  have finite optimal cost, respectively $P^\star$ and $q^\star$, and strong
  duality holds, i.e.,
  \begin{align*}
    P^\star = q^\star.
  \end{align*}%
  \label{lem:equivalence_dual_and_initial} %
\end{lemma}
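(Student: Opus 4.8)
The plan is to obtain~\eqref{eq:dual_centr} as the Lagrangian dual of the epigraph problem~\eqref{eq:starting_problem}, dualizing only the $\slotUB$ coupling inequalities $\sum_{i=1}^N g^i_\slotIndex(\sx{i}_\slotIndex)\le P$ and keeping the local constraints $\sx{i}\in X^i$ together with the free epigraph variable $P$ as an abstract constraint. Introducing a multiplier vector $\smu{}\in\real^\slotUB$, $\smu{}\succeq 0$, the Lagrangian reads
\begin{align*}
  L\big(\{\sx{i}\},P,\smu{}\big)
  &= P + \sum_{\slotIndex=1}^\slotUB \mu_\slotIndex\Big(\sum_{i=1}^N g^i_\slotIndex(\sx{i}_\slotIndex)-P\Big)
  \\
  &= \big(1-\1^\top\smu{}\big)P + \sum_{i=1}^N\sum_{\slotIndex=1}^\slotUB \mu_\slotIndex\, g^i_\slotIndex(\sx{i}_\slotIndex).
\end{align*}

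First I would compute the dual function $\Psi(\smu{}) = \inf_{P\in\real,\ \sx{i}\in X^i} L(\{\sx{i}\},P,\smu{})$. The term $(1-\1^\top\smu{})P$ is linear in the unconstrained variable $P$, so $\Psi(\smu{})=-\infty$ whenever $\1^\top\smu{}\neq 1$; this is precisely what produces the simplex constraint in~\eqref{eq:dual_centr}. On $\{\smu{}\succeq 0:\ \1^\top\smu{}=1\}$ that term vanishes and the remaining objective $\sum_{i=1}^N\sum_\slotIndex \mu_\slotIndex g^i_\slotIndex(\sx{i}_\slotIndex)$ is separable across agents with independent constraints $\sx{i}\in X^i$, so the infimum splits as $\Psi(\smu{})=\sum_{i=1}^N\min_{\sx{i}\in X^i}\sum_{\slotIndex=1}^\slotUB\mu_\slotIndex g^i_\slotIndex(\sx{i}_\slotIndex)=\sum_{i=1}^N q^i(\smu{})$, with $q^i$ as in~\eqref{eq:qi_definition}. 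Maximizing $\Psi$ over $\smu{}\succeq 0$ then yields exactly~\eqref{eq:dual_centr}.

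Next I would establish strong duality via Slater's condition (in the form recalled in the Appendix for convex problems with a finite-dimensional abstract constraint): problem~\eqref{eq:starting_problem} is convex, each $g^i_\slotIndex$ is a finite convex — hence continuous — function on $\real$, and choosing any $\barsx{i}\in X^i$ (nonempty) together with $\bar P>\max_{\slotIndex\in\until{\slotUB}}\sum_{i=1}^N g^i_\slotIndex(\barsx{i}_\slotIndex)$ gives a strictly feasible point, the maximum being over finitely many finite numbers. Hence $P^\star=q^\star$ and the dual optimum is attained. Finiteness then follows on either side: each $q^i$ is finite (an infimum of a continuous function over the compact set $X^i$) and concave (a pointwise minimum of linear functions of $\smu{}$), hence continuous on $\real^\slotUB$, so the dual objective $\sum_i q^i$ attains a finite maximum $q^\star$ over the compact simplex; by strong duality $P^\star=q^\star$ is finite as well. (Equivalently, the primal is feasible so $P^\star<+\infty$, and for every feasible point $P\ge\sum_{i=1}^N g^i_1(\sx{i}_1)\ge\sum_{i=1}^N\min_{\sx{i}\in X^i}g^i_1(\sx{i}_1)>-\infty$ by compactness of $X^i$ and continuity of $g^i_1$.)

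The only points requiring care are mechanical: isolating the minimization over the free variable $P$ so that it yields the \emph{equality} $\1^\top\smu{}=1$ rather than an inequality, and making the per-agent decomposition of the inner minimization explicit. I do not expect a genuine obstacle; the standing assumptions enter only in the verification of Slater's condition and in the boundedness/finiteness arguments, and both rely solely on nonemptiness and compactness of the sets $X^i$ and continuity of the $g^i_\slotIndex$.
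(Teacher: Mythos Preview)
Your proposal is correct and follows essentially the same approach as the paper: dualize the $\slotUB$ coupling inequalities while keeping $\sx{i}\in X^i$ as abstract constraints, observe that minimizing over the free variable $P$ forces the simplex equality $\1^\top\smu{}=1$, split the remaining minimization over agents to obtain $\sum_i q^i(\smu{})$, and verify Slater's condition by picking any $\barsx{i}\in X^i$ and a sufficiently large $\bar P$. The only minor difference is that you give a slightly more explicit finiteness argument (via continuity of $q^i$ on the compact simplex, and an alternative primal lower bound), whereas the paper simply notes that primal feasibility implies $P^\star$ is finite and invokes strong duality.
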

\begin{proof}
  We start showing that problem~\eqref{eq:dual_centr} is the dual of
  \eqref{eq:starting_problem}. Let
  $\smu{} := [\smu{}_1,\ldots,\smu{}_\slotUB]^\top\in\real^\slotUB$ be $\slotUB$ Lagrange multipliers
  associated to the inequality constraints
  $\sum_{i=1}^N g^i_\slotIndex (\sx{i}_\slotIndex) - P \le 0$ for $\slotIndex\in\until{\slotUB}$ in
  \eqref{eq:starting_problem}. Then the partial Lagrangian\footnote{We have a
  ``partial Lagrangian'' since we do not dualize all the constraints. Here local
  constraints $\sx{i} \in X^i$, $i\in\until{N}$, are not dualized.} of
  problem~\eqref{eq:starting_problem} is given by
  \begin{align*}
  \begin{split}
    \LL_1(\sx{1}, \ldots, \sx{N}, P, \smu{} )
    & = P + \sum_{\slotIndex=1}^\slotUB \smu{}_\slotIndex \Big(\sum_{i=1}^N g^i_\slotIndex (\sx{i}_\slotIndex)  - P \Big)
    \\
    & = P \Big(1-\sum_{\slotIndex=1}^\slotUB \smu{}_\slotIndex \Big) +
    \sum_{i=1}^N \sum_{\slotIndex=1}^\slotUB \smu{}_\slotIndex g^i_\slotIndex ( \sx{i}_\slotIndex ).
  \end{split}
  \end{align*}

By definition, the dual function is defined as
\begin{align*}
  q( \smu{} ) :=
    \min_{ \sx{1} \in X_1,\ldots, \sx{N} \in X_N, P }
      \LL_1 ( \sx{1}, \ldots, \sx{N}, P, \smu{} ),
\end{align*}
where the presence of constraints $\sx{i} \in X^i$ for all $i\in\until{N}$ is
due to the fact that we have not dualized them.

The minimization of $\LL_1$ with respect to $P$ gives rise to the simplex
constraint
$\sum_{\slotIndex=1}^{\slotUB} \smu{}_\slotIndex = 1$.
The minimization with respect to $\sx{i}$ splits over $i\in\until{N}$, so that
the dual function can be written as the sum of terms $q^i$ given in~\eqref{eq:qi_definition}.

To prove strong duality, we show that the strong duality theorem for convex
inequality constraints, \cite[Proposition~5.3.1]{bertsekas1999nonlinear},
applies. Since the sets $X^i$, $i\in\until{N}$, are convex (and compact), we
need to show that the inequality constraints
$\sum_{i=1}^N g^i_\slotIndex (\sx{i}_\slotIndex) - P \le 0$ for all
$\slotIndex\in\until{\slotUB}$ are convex and that there exist
$\barsx{1} \in X_1$, $\ldots$, $\barsx{N} \in X_N$ and $\bar{P}$ such that the
strict inequality holds.
Since each $g^i_\slotIndex$ and $-P$ are convex functions, then for all $\slotIndex$
each function
\begin{align*}
  \bar{g}_s ( \sx{1}_\slotIndex,\ldots,\sx{N}_\slotIndex,P) :=
  \sum_{i=1}^N g^i_\slotIndex ( \sx{i}_\slotIndex ) - P
\end{align*}
is convex. Also, since the sets $X^i$, $i\in\until{N}$ are nonempty, there exist
$\barsx{i} \in X^i$, $i\in\until{N}$, and a sufficiently large (finite)
$\bar{P}$ such that the strict inequalities
$\bar{g}( \barsx{1}_\slotIndex,\ldots,\barsx{N}_\slotIndex, \bar P) < 0$,
$\slotIndex\in\until{\slotUB}$ are satisfied and, thus, the Slater's condition holds.
Finally, since a feasible point for the convex
problem~\eqref{eq:minimax_starting_problem} always exists, then the optimal cost
$P^\star$ is finite and so $q^\star$, thus concluding the proof.
\end{proof}

In order to make problem~\eqref{eq:dual_centr} amenable for a distributed
solution, we can rewrite it in an equivalent form.
To this end, we introduce copies of the common optimization variable $\smu{}$
and coherence constraints having the sparsity of the connected graph $\GG$, thus
obtaining
\begin{align}
\begin{split}
  \max_{ \smu{1}, \ldots, \smu{N} }
  \: & \: \sum_{i=1}^N q^i( \smu{i} )
  \\
  \subj \: & \: \1^\top \smu{i} = 1,  \: \smu{i} \succeq 0, \hspace{0.7cm} i\in\until{N}
  \\ & \:
  \smu{i} = \smu{j}, \hspace{2.1cm} (i,j) \in \EE.
\end{split}
\label{eq:problem_with_copies}
\end{align}
Notice that we have also duplicated the simplex constraint, so that it becomes
\emph{local} at each node.

To solve this problem, we can use a dual decomposition approach by designing a
dual subgradient algorithm. Notice that dual methods can be applied to
\eqref{eq:problem_with_copies} since the constraints are convex and the cost
function concave.
Also, as known in the distributed optimization literature, a dual subgradient
algorithm applied to problem~\eqref{eq:problem_with_copies} would immediately
result into a distributed algorithm if functions $q^i$ were available in closed
form.

\begin{remark}
  In standard convex optimization deriving the dual of a dual problem brings
  back to a primal formulation. However, we want to stress that in what we will
  develop in the following, problem~\eqref{eq:problem_with_copies} is dualized
  rather than problem~\eqref{eq:dual_centr}. In particular, different
  constraints are dualized, namely the coherence constraints rather than the
  simplex ones. Therefore, it is not obvious if and how this leads back to a
  primal formulation. \oprocend
\end{remark}

We start deriving the dual subgradient algorithm by dualizing only the coherence
constraints. Thus, we write the partial Lagrangian
\begin{align}
\begin{split}
  & \LL_2(\smu{1},\ldots, \smu{N}, \{ \slambda{ij} \}_{(i,j) \in \EE } )
  \\
  & \hspace{2.3cm} = \sum_{i=1}^N \Big( q^i( \smu{i} ) + \sum_{j\in\nbrs_i} {\slambda{ij}}^\top (\smu{i} - \smu{j} ) \Big)
\end{split}
\label{eq:lagrangian_definition}
\end{align}
where $\slambda{ij} \in \real^\slotUB$ for all $(i,j)\in \EE$ are Lagrange multipliers associated to
the constraints $\smu{i} - \smu{j} = 0$.

Since the communication graph $\GG$ is undirected and connected, we can exploit
the symmetry of the constraints. In fact, for each $(i,j)\in\EE$ we also have
$(j,i) \in\EE$, and, expanding all the terms in
\eqref{eq:lagrangian_definition}, for given $i$ and $j$, we always have both the
terms ${\slambda{ij}}^\top (\smu{i} - \smu{j} )$ and
${\slambda{ji}}^\top (\smu{j} - \smu{i} )$.
%
Thus, after some simple algebraic manipulations, we get
\begin{align}
\begin{split}
  & \LL_2(\smu{1}, \ldots, \smu{N},  \{ \slambda{ij} \}_{(i,j) \in \EE })
  \\
  & \hspace{2.3cm} = \sum_{i=1}^N \Big( q^i( \smu{i} ) + {\smu{i} }^\top \!\! \sum_{j\in\nbrs_i} (\slambda{ij} - \slambda{ji})  \Big),
\end{split}
\label{eq:lagrangian_rearrangement}
\end{align}
which is separable with respect to $\smu{i}$.

The dual of problem~\eqref{eq:problem_with_copies} is thus
\begin{equation}
  \min_{\{\slambda{ij}\}_{(i,j)\in\EE}}
  \eta( \{ \slambda{ij} \}_{ (i,j)\in\EE} ) =
  \sum_{i=1}^N \eta^i ( \{\slambda{ij},\slambda{ji}\}_{j\in\nbrs_i} ),
\label{eq:dual_dual}
\end{equation}
with, for all $i\in\until{N}$,
\begin{align}
  \eta^i (\{\slambda{ij},\slambda{ji}\}_{j\in\nbrs_i} ) \! :=\!\!\!
  \max_{\1^\top \smu{i} = 1,\smu{i} \succeq 0}
  \! q^i( \smu{i} ) \! + \!
  {\smu{i} }^\top\!\!\! \sum_{j\in\nbrs_i} \! (\slambda{ij} \! -\!  \slambda{ji}).
  \label{eq:eta_definition}
\end{align}

In the next lemma we characterize the properties of problem~\eqref{eq:dual_dual}.
\begin{lemma}
  Problem~\eqref{eq:dual_dual}, which is the dual of
  problem~\eqref{eq:problem_with_copies}, has a bounded optimal cost, call it
  $\eta^\star$, and strong duality holds, so that
  \begin{align}
    \eta^\star = q^\star = P^\star.
  \label{eq:eta_q_P}
  \end{align}
\label{lem:strong_duality_dualdual}
\end{lemma}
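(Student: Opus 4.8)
The plan is to proceed in two stages: first pin down the optimal value of problem~\eqref{eq:problem_with_copies}, and then invoke a strong duality theorem for the primal–dual pair~\eqref{eq:problem_with_copies}--\eqref{eq:dual_dual} that was constructed just before the lemma.

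First I would show that problem~\eqref{eq:problem_with_copies} has the same optimal cost $q^\star$ as problem~\eqref{eq:dual_centr}. Since the communication graph $\GG$ is connected, the coherence constraints $\smu{i} = \smu{j}$, $(i,j)\in\EE$, force $\smu{1} = \cdots = \smu{N}$; calling this common vector $\smu{}$, the constraints $\1^\top\smu{i}=1$, $\smu{i}\succeq 0$ collapse to $\1^\top\smu{}=1$, $\smu{}\succeq 0$, and the objective becomes $\sum_{i=1}^N q^i(\smu{})$, i.e., exactly problem~\eqref{eq:dual_centr}. Conversely, any $\smu{}$ feasible for~\eqref{eq:dual_centr} yields the feasible point $\smu{1}=\cdots=\smu{N}=\smu{}$ of~\eqref{eq:problem_with_copies} with identical objective value. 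Hence the two problems share the optimal value $q^\star$, which is finite by Lemma~\ref{lem:equivalence_dual_and_initial}.

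Next I would argue that strong duality holds for problem~\eqref{eq:problem_with_copies} relative to the partial dualization of its coherence constraints, whose dual is precisely problem~\eqref{eq:dual_dual}. The key observations are: (i) each $q^i$ in~\eqref{eq:qi_definition} is a pointwise minimum over the compact set $X^i$ of functions that are affine in $\smu{}$, hence $q^i$ is real-valued and concave (thus continuous) on all of $\real^\slotUB$; (ii) the dualized constraints are affine; and (iii) the undualized constraints $\1^\top\smu{i}=1$, $\smu{i}\succeq 0$ define a nonempty compact polyhedron. Therefore the strong duality theorem for convex problems with affine coupling constraints (the equality-constraint variant of \cite[Proposition~5.3.1]{bertsekas1999nonlinear}) applies: the map $-\sum_{i=1}^N q^i$ is a finite convex function on $\real^{N\slotUB}$, and there is a point in the relative interior of the product simplex that satisfies all coherence constraints, e.g. $\smu{i} = \frac{1}{\slotUB}\1$ for every $i\in\until{N}$. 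Consequently there is no duality gap between~\eqref{eq:problem_with_copies} and~\eqref{eq:dual_dual}.

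Combining the two stages, $\eta^\star$ equals the optimal cost of~\eqref{eq:problem_with_copies}, which equals $q^\star$, which by Lemma~\ref{lem:equivalence_dual_and_initial} equals $P^\star$; in particular $\eta^\star$ is finite, hence bounded. The step I expect to require the most care is~(ii)--(iii): making sure the invoked strong duality result is the one valid for \emph{affine} equality constraints (the Slater condition used in Lemma~\ref{lem:equivalence_dual_and_initial} is for inequality constraints), and exhibiting the relative-interior feasible point so that the qualification condition of the theorem is verified; everything else is bookkeeping. One should also keep in mind the caveat of the Remark above — that~\eqref{eq:dual_dual} need not be a ``primal'' reformulation — but this is irrelevant here, since the value identity only uses the strong-duality pairing of~\eqref{eq:problem_with_copies} with~\eqref{eq:dual_dual}.
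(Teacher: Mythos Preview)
Your proposal is correct and follows essentially the same two-step strategy as the paper: first argue that \eqref{eq:problem_with_copies} is an equivalent reformulation of \eqref{eq:dual_centr} (hence has optimal value $q^\star$, finite by Lemma~\ref{lem:equivalence_dual_and_initial}), then invoke strong duality between \eqref{eq:problem_with_copies} and \eqref{eq:dual_dual} to obtain $\eta^\star=q^\star=P^\star$. The paper's proof is terser and does not explicitly verify a constraint qualification for the second step, so your care in noting that the dualized constraints are affine and in exhibiting the relative-interior feasible point $\smu{i}=\frac{1}{\slotUB}\1$ is a welcome sharpening rather than a departure.
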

\begin{proof}
  Since problem~\eqref{eq:dual_centr} is a dual problem, its cost function
  $\sum_{i=1}^N q^i( \smu{} )$ is concave on its domain, which is convex
  (simplex constraint). Moreover, by
  Lemma~\ref{lem:equivalence_dual_and_initial} its optimal cost $q^\star$ is
  finite.  Problem~\eqref{eq:problem_with_copies} is an equivalent formulation
  of~\eqref{eq:dual_centr} and, thus, has the same (finite) optimal cost
  $q^\star$. This allows us to conclude that strong duality holds between
  problem~\eqref{eq:problem_with_copies} and its dual~\eqref{eq:dual_dual}, so
  that, $\eta^\star = q^\star$. The second equality in~\eqref{eq:eta_q_P} holds
  by Lemma~\ref{lem:equivalence_dual_and_initial}, so that the proof follows.%
\end{proof}

Problem~\eqref{eq:dual_dual} has a particularly appealing structure for
distributed computation. In fact, the cost function is separable and each term
$\eta^i$ of the cost function depends only on neighboring variables
$\slambda{ij}$ and $\slambda{ji}$ with $j\in\nbrs_i$. Thus, a subgradient method
applied to this problem turns out to be a distributed algorithm.
Since problem~\eqref{eq:dual_dual} is the dual of~\eqref{eq:problem_with_copies}
we recall, \cite[Section~6.1]{bertsekas1999nonlinear}, how to compute a
subgradient of $\eta$ with respect to each component, that is,
\begin{align}
\frac{\tilde \partial
   \eta ( \{ \slambda{ij} \}_{(i,j) \in \EE } ) }{\partial \slambda{ij}}
  = {\smu{i}}^\star - {\smu{j}}^\star,
\label{eq:eta_subgradient}
\end{align}
where $\frac{\tilde \partial \eta (\cdot)}{\partial \slambda{ij}}$ denotes the component
associated to the variable $\slambda{ij}$ of a subgradient of $\eta$, and
\begin{align*}
  {\smu{k}}^\star \in \argmax_{ \1^\top \smu{k} = 1,\smu{k} \succeq 0 } \bigg( q_k( \smu{k} ) +
  {\smu{k}}^\top \sum_{h\in\nbrs_k} (\slambda{kh} - \slambda{hk}) \bigg),
\end{align*}
for $k=i,j$.
%

The distributed dual subgradient algorithm for problem \eqref{eq:problem_with_copies} can be
summarized as follows. For each node $i\in\until{N}$:
\begin{itemize}
\item[(S1)]\label{item:s1} receive $\slambda{ji}(t)$, for each $j \in \nbrs_i$, and
compute a subgradient $\smu{i}(t+1)$ by solving
\begin{align}
  \begin{split}
    \max_{\smu{i} } \: & \: q^i( \smu{i} ) +
    {\smu{i}}^\top \!\! \sum_{j\in\nbrs_i} (\slambda{ij}(t) - \slambda{ji}(t))
    \\
    \subj \:   & \: \1^\top \smu{i} = 1,\smu{i} \succeq 0.
  \end{split}
\label{eq:dual_subgradient}
\end{align}
\item[(S2)]\label{item:s2} exchange with neighbors the updated $\smu{j}(t+1)$, $j \in
\nbrs_i$, and update $\slambda{ij}$, $j\in\nbrs_i$, via
\begin{align*}
  \slambda{ij} (t \!+\! 1) = \slambda{ij}(t) - \gamma(t) (\smu{i}(t \!+\! 1) \!-\! \smu{j}(t \!+\! 1)).
\end{align*}
where $\gamma(t)$ denotes a diminishing step-size satisfying
Assumption~\ref{ass:step-size} in Appendix~\ref{app:subgradient_method}.
\end{itemize}

It is worth noting that in~\eqref{eq:dual_subgradient} the value of
$\slambda{ij}(t)$ and $\slambda{ji}(t)$, for $j\in\nbrs_i$, is fixed as
highlighted by the index $t$.
Moreover, we want to stress, once again, that the algorithm is \emph{not}
implementable as it is written, since functions $q^i$ are not available in
closed form.

On this regard, we point out that here we slightly abuse notation since in
(S1)-(S2) we use $\smu{i}(t)$ as in the \Dminmax/ algorithm, but without proving
its equivalence yet.  Since we will prove it in the next, we preferred not to
overweight the notation.

Before proving the convergence of the updates (S1)-(S2) we need the following lemma.
\begin{lemma}
  For each $i\in\until{N}$, the function $\smu{i} \mapsto q^i(\smu{i})$
  defined in~\eqref{eq:qi_definition} is concave over $ \smu{i}\succeq 0$.
\label{lem:qi_concavity}
\end{lemma}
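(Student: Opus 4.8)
The plan is to exhibit $q^i$ as the pointwise infimum of a family of affine functions of $\smu{i}$ and then invoke the elementary fact that such an infimum is concave.

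First I would fix an arbitrary $\sx{i}\in X^i$ and observe that the map
\[
\smu{i} \;\longmapsto\; \sum_{\slotIndex=1}^\slotUB \smu{i}_\slotIndex\, g^i_\slotIndex(\sx{i}_\slotIndex)
\]
is linear in $\smu{i}$, since once $\sx{i}$ is frozen the coefficients $g^i_\slotIndex(\sx{i}_\slotIndex)$ are fixed real numbers; in particular it is concave on all of $\real^\slotUB$. This step uses only that each $g^i_\slotIndex(\sx{i}_\slotIndex)$ is a well-defined finite number (true because a convex function on $\real$ is continuous, hence real-valued), and \emph{not} the convexity of the $g^i_\slotIndex$ as functions: convexity of the $g^i_\slotIndex$ will matter elsewhere (e.g., for the primal subproblems), whereas concavity of $q^i$ in the multiplier is purely a matter of linearity.

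Next I would record that, for each fixed $\smu{i}\in\real^\slotUB$, the objective $\sx{i}\mapsto \sum_{\slotIndex} \smu{i}_\slotIndex g^i_\slotIndex(\sx{i}_\slotIndex)$ is continuous and $X^i$ is nonempty and compact, so the minimum in~\eqref{eq:qi_definition} is attained and finite; hence $q^i$ is a real-valued function on $\real^\slotUB$, and, writing $\ell_{\sx{i}}(\smu{i}) := \sum_{\slotIndex=1}^\slotUB \smu{i}_\slotIndex g^i_\slotIndex(\sx{i}_\slotIndex)$, we have $q^i(\smu{i}) = \inf_{\sx{i}\in X^i}\ell_{\sx{i}}(\smu{i})$, i.e., $q^i$ is the pointwise infimum over the index set $X^i$ of the affine maps $\ell_{\sx{i}}$.

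Finally I would close the argument by the standard pointwise-infimum estimate: for any $u,v\in\real^\slotUB$, any $\theta\in[0,1]$, and any $\sx{i}\in X^i$,
\[
\ell_{\sx{i}}\big(\theta u + (1-\theta)v\big) = \theta\,\ell_{\sx{i}}(u) + (1-\theta)\,\ell_{\sx{i}}(v) \;\ge\; \theta\, q^i(u) + (1-\theta)\, q^i(v),
\]
and taking the infimum of the left-hand side over $\sx{i}\in X^i$ yields $q^i\big(\theta u + (1-\theta)v\big) \ge \theta\, q^i(u) + (1-\theta)\, q^i(v)$. Since the set $\{\smu{i}\succeq 0\}$ is convex, this gives concavity of $q^i$ on it (indeed on all of $\real^\slotUB$). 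I do not expect any genuine obstacle in this lemma; the only point worth being careful about is keeping the two variables' roles separate, as noted above.
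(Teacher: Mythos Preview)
Your argument is correct: exhibiting $q^i$ as the pointwise infimum over $\sx{i}\in X^i$ of the affine maps $\smu{i}\mapsto\sum_{\slotIndex}\smu{i}_\slotIndex g^i_\slotIndex(\sx{i}_\slotIndex)$ and concluding concavity is a clean and complete proof.

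The paper takes a slightly different, more compressed route: it observes that $q^i(\smu{i})$ is precisely the Lagrangian dual function of the auxiliary feasibility problem $\min_{\sz{i}\in X^i}\, 0$ subject to $g^i_\slotIndex(\sz{i}_\slotIndex)\le 0$, $\slotIndex\in\until{\slotUB}$, and then invokes the standard fact that dual functions are concave on their domain. Of course, that standard fact is itself typically proved via your infimum-of-affine argument, so the two proofs share the same core idea. Your version is more self-contained and makes the mechanism explicit; the paper's version is a one-line reduction to a textbook result. Either is perfectly acceptable here.
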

\begin{proof}
  For each $i\in\until{N}$, consider the (feasibility) convex problem
\begin{align*}
  \min_{\sz{i} \in X^i} \: & \: 0
  \\
  \subj \: & \:  g^i_\slotIndex (\sz{i}_\slotIndex) \leq 0, \hspace{0.3cm} \slotIndex \in \until{\slotUB}.
\end{align*}

Then, $q^i(\smu{i})$ is the dual function of that problem and, thus, 
is a concave function on its domain, namely $ \smu{i}\succeq 0$.%
\end{proof}

We can now prove the convergence in objective value of the dual subgradient.
\begin{lemma}
  The dual subgradient updates (S1)-(S2), with step-size $\gamma(t)$ satisfying
  Assumption~\ref{ass:step-size}, generate sequences
  $\{ \slambda{ij}(t) \}$, $(i,j)\in \EE$, that converge in objective value to
  the optimal cost $\eta^\star$ of problem~\eqref{eq:dual_dual}.
  \label{lem:dual_subgradient_correcteness}
\end{lemma}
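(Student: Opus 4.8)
The plan is to recognize the updates (S1)--(S2) as a plain subgradient method applied to the convex minimization problem~\eqref{eq:dual_dual}, and then invoke the convergence result for the subgradient method reported in Appendix~\ref{app:subgradient_method}. I would carry this out in three steps.

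First, I would check that problem~\eqref{eq:dual_dual} satisfies the structural hypotheses required by that result. The cost $\eta = \sum_{i=1}^N \eta^i$ is convex, since each $\eta^i$ in~\eqref{eq:eta_definition} is the pointwise maximum, over $\smu{i}$ ranging in the simplex, of a family of functions that are affine in $(\{\slambda{ij},\slambda{ji}\}_{j\in\nbrs_i})$. Moreover $\eta^i$ is finite-valued everywhere: the inner maximization is over the compact simplex $\{\1^\top\smu{i}=1,\ \smu{i}\succeq 0\}$, and $q^i$ in~\eqref{eq:qi_definition} is finite there because $X^i$ is compact and each $g^i_\slotIndex$, being convex on $\real$, is continuous. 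Finally, by Lemma~\ref{lem:strong_duality_dualdual} the optimal value $\eta^\star = q^\star = P^\star$ is finite, so $\eta$ is bounded below, and $\gamma(t)$ satisfies Assumption~\ref{ass:step-size} by hypothesis.

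Second, I would argue that the direction used in (S2) is minus a genuine subgradient of $\eta$ at the current iterate $\{\slambda{ij}(t)\}$. By Lemma~\ref{lem:qi_concavity} each $q^i$ is concave on $\smu{i}\succeq 0$, and the feasible set of~\eqref{eq:dual_subgradient} is a compact subset of that domain, so the maximization in (S1) attains its optimum and $\smu{i}(t+1)$ is well defined. Since $\eta$ is the dual function of problem~\eqref{eq:problem_with_copies} with the coherence constraints dualized, the subgradient expression~\eqref{eq:eta_subgradient}, obtained as in \cite[Section~6.1]{bertsekas1999nonlinear}, then shows that the vector with components $\smu{i}(t+1)-\smu{j}(t+1)$, $(i,j)\in\EE$, is a subgradient of $\eta$, i.e.\ exactly what a subgradient step requires. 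Third, and this is the only quantitative point that really uses the problem structure, I would show the subgradients are uniformly bounded: each $\smu{i}(t+1)$ lies in the probability simplex of $\real^\slotUB$, hence $\nnorm{\smu{i}(t+1)}\le 1$, so each component satisfies $\nnorm{\smu{i}(t+1)-\smu{j}(t+1)}\le 2$ and the whole subgradient has norm at most $2\sqrt{\norm{\EE}}$, uniformly in $t$ and independently of the iterate. Convexity, finite optimum, uniformly bounded subgradients, and the diminishing step-size then let the Appendix result conclude that the sequence $\{\slambda{ij}(t)\}$ converges in objective value to $\eta^\star$.

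The main obstacle is conceptual rather than computational: one must pin down that (S1)--(S2) is literally \emph{the} subgradient method for~\eqref{eq:dual_dual}, i.e.\ that the maximizer in (S1) exists (which is where Lemma~\ref{lem:qi_concavity} together with compactness of the simplex is needed) and that~\eqref{eq:eta_subgradient} legitimately produces a subgradient of the \emph{minimized} function $\eta$ rather than merely a vector attached to the inner problem. Once this identification is secured, the uniform bound on the subgradients is immediate from the simplex constraint, and the statement follows by a direct application of the cited subgradient-method result.
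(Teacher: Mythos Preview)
Your proposal is correct and follows essentially the same approach as the paper's proof: both identify (S1)--(S2) as the subgradient method for~\eqref{eq:dual_dual}, use Lemma~\ref{lem:qi_concavity} together with compactness of the simplex to ensure the maximizer in (S1) exists and the resulting subgradients are bounded, and then invoke Proposition~\ref{prop:subgradient_convergence}. Your version is slightly more explicit (you spell out the convexity of $\eta$ and give the concrete bound $2\sqrt{|\EE|}$ on the subgradient norm), but the argument is the same.
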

\begin{proof}
  As already recalled in equation~\eqref{eq:eta_subgradient}, we can build subgradients
  of $\eta$ by solving problem in the form~\eqref{eq:dual_subgradient}.
  Since in~\eqref{eq:dual_subgradient}, the maximization of the concave
  (Lemma~\ref{lem:qi_concavity}) function $q^i$ is performed over the nonempty,
  compact (and convex) probability simplex $\1^\top \smu{i}=1$,
  $\smu{i} \succeq 0$, then the maximum is always attained at a finite value. As
  a consequence, at each iteration the subgradients of $\eta$ are bounded
  quantities.  Moreover, the step-size $\gamma(t)$ satisfies
  Assumption~\ref{ass:step-size} and, thus, we can invoke
  Proposition~\ref{prop:subgradient_convergence} which guarantees that (S1)-(S2)
  converges in objective value to the optimal cost $\eta^\star$ of
  problem~\eqref{eq:dual_dual} so that the proof follows.
\end{proof}

We can explicitly rephrase update~\eqref{eq:dual_subgradient} by plugging in the
definition of $q^i$, given in~\eqref{eq:qi_definition}, thus obtaining the
following max-min optimization problem
\begin{align}
  \max_{ \1^\top \smu{i} = 1,\smu{i} \succeq 0 } \! \bigg(\!
  \min_{\sx{i} \in X^i} \!
  \sum_{\slotIndex=1}^\slotUB \smu{i}_\slotIndex \Big(
  g^i_\slotIndex ( \sx{i}_\slotIndex) \! + \!\!
  \sum_{j\in\nbrs_i} \!\! (\slambda{ij}(t) \!-\! \slambda{ji}(t))_\slotIndex \Big) \!\!\bigg).
\label{eq:maxmin}
\end{align}
Notice that \eqref{eq:maxmin} is a local problem at each node $i$ once
$\slambda{ij}(t)$ and $\slambda{ji}(t)$ for all $j\in\nbrs_i$ are given.
Thus, the dual subgradient algorithm (S1)-(S2) could be implemented in a
distributed way by letting each node $i$ solve problem~\eqref{eq:maxmin} and
exchange $\slambda{ij}(t)$ and $\slambda{ji}(t)$ with neighbors $j\in\nbrs_i$.
Next we further explore the structure of \eqref{eq:maxmin} to prove that
\Dminmax/ solves the original problem~\eqref{eq:starting_problem}.

The next lemma is a first instrumental result.
\begin{lemma}
Consider the optimization problem
\begin{align}
\begin{split}
  \max_{\smu{i} } \: & \:
     \sum_{\slotIndex=1}^\slotUB \smu{i}_\slotIndex \Big(
        g^i_\slotIndex (\sx{i}_\slotIndex) + \sum_{j\in\nbrs_i} (\slambda{ij}(t) - \slambda{ji}(t) )_{\slotIndex}
      \Big)
  \\
  \subj \: & \: \1^\top \smu{i} = 1, \smu{i} \succeq 0,
\end{split}
\label{eq:inner_maximization_problem}
\end{align}
with given $\sx{i}$, $\slambda{ij}(t)$ and $\slambda{ji}(t)$, $j\in\nbrs_i$.
Then, the problem
\begin{align}
\begin{split}
  \min_{\srho{i}} \: & \: \srho{i}
  \\
  \subj \:
   & \: g^i_\slotIndex (\sx{i}_\slotIndex ) +\sum_{j\in\nbrs_i} (\slambda{ij}(t) - \slambda{ji}(t) )_{\slotIndex} \le \srho{i},
  \\
  & \hspace{4.5cm} \slotIndex\in\until{\slotUB}
\end{split}
\label{eq:rho_formulation}
\end{align}
is dual of \eqref{eq:inner_maximization_problem} and strong duality holds.
\label{lem:inner_maximization_dual}
\end{lemma}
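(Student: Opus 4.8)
The plan is to derive \eqref{eq:rho_formulation} as the Lagrangian dual of \eqref{eq:inner_maximization_problem}, and then to obtain strong duality from the fact that \eqref{eq:inner_maximization_problem} is a feasible linear program over a compact set. First I would freeze the data $\sx{i}$, $\slambda{ij}(t)$, $\slambda{ji}(t)$, $j\in\nbrs_i$, and abbreviate the constant coefficients by $c_\slotIndex := g^i_\slotIndex(\sx{i}_\slotIndex) + \sum_{j\in\nbrs_i}(\slambda{ij}(t)-\slambda{ji}(t))_\slotIndex$, $\slotIndex\in\until{\slotUB}$, so that \eqref{eq:inner_maximization_problem} reads $\max\{\, c^\top\smu{i} : \1^\top\smu{i}=1,\ \smu{i}\succeq 0 \,\}$. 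I would then dualize only the simplex equality constraint $\1^\top\smu{i}=1$, introducing a scalar multiplier that I call $\srho{i}$ and keeping $\smu{i}\succeq 0$ as an explicit constraint (a partial Lagrangian, in the spirit of the proof of Lemma~\ref{lem:equivalence_dual_and_initial}). This gives the Lagrangian
\[
  \LL(\smu{i},\srho{i}) = c^\top\smu{i} - \srho{i}\big(\1^\top\smu{i}-1\big) = \srho{i} + \sum_{\slotIndex=1}^\slotUB (c_\slotIndex - \srho{i})\,\smu{i}_\slotIndex ,
\]
so that the dual function $\sup_{\smu{i}\succeq 0}\LL(\smu{i},\srho{i})$ equals $\srho{i}$ when $c_\slotIndex - \srho{i}\le 0$ for all $\slotIndex\in\until{\slotUB}$ (the supremum being attained at $\smu{i}=0$) and $+\infty$ otherwise. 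Minimizing over the admissible $\srho{i}$ reproduces exactly \eqref{eq:rho_formulation}; in particular this identifies the epigraph variable $\srho{i}$ appearing in \eqref{eq:rho_formulation} — and hence in \eqref{eq:alg_minimization} — with the multiplier of the simplex constraint.

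For strong duality I would note that \eqref{eq:inner_maximization_problem} is a linear program whose feasible set, the probability simplex, is nonempty and compact, so its optimal value is finite (indeed it equals $\max_\slotIndex c_\slotIndex$, attained at a vertex). Its constraints are affine and, moreover, Slater's condition holds trivially (e.g.\ $\smu{i}=\frac{1}{\slotUB}\1\succ 0$), so the strong duality theorem \cite[Proposition~5.3.1]{bertsekas1999nonlinear} — the same one invoked for Lemma~\ref{lem:equivalence_dual_and_initial} — applies and yields that \eqref{eq:rho_formulation} and \eqref{eq:inner_maximization_problem} share the common optimal value $\max_\slotIndex c_\slotIndex$. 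Alternatively one may bypass the theorem and simply observe that \eqref{eq:rho_formulation} has optimal value $\max_\slotIndex c_\slotIndex$ as well, which directly gives equality of the two optima.

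I do not expect a real obstacle here: the argument is a textbook linear-programming duality computation. The only points that need care are bookkeeping ones — choosing the sign convention so that the equality multiplier comes out as $+\srho{i}$ rather than $-\srho{i}$, which is what makes the epigraph variable of \eqref{eq:alg_minimization} the right object — and recording that, since strong duality holds and both problems attain their optima, the primal--dual optimal pair produced in the \Dminmax/ update is well defined.
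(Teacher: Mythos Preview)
Your proposal is correct and follows essentially the same route as the paper: both dualize only the equality constraint $\1^\top\smu{i}=1$ via a scalar multiplier $\srho{i}$, compute the partial Lagrangian $\srho{i}+\sum_\slotIndex(c_\slotIndex-\srho{i})\smu{i}_\slotIndex$, maximize over $\smu{i}\succeq 0$ to obtain the dual function $\srho{i}$ on the domain $\srho{i}\ge c_\slotIndex$ for all $\slotIndex$, and invoke LP strong duality. Your added remarks (the explicit optimal value $\max_\slotIndex c_\slotIndex$ and the direct verification of equality) are correct embellishments but not needed.
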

\begin{proof}
First, since $\sx{i}$ (as well as $\slambda{ij}(t)$ and $\slambda{ji}(t)$) is given, problem~\eqref{eq:inner_maximization_problem}
is a feasible \emph{linear} program (the simplex constraint is nonempty) and, thus, strong duality holds.
Introducing a scalar multiplier $\srho{i}$ associated to the constraint $\1^\top \smu{i} = 1$, we
write the partial Lagrangian of~\eqref{eq:inner_maximization_problem}
\begin{align*}
  \LL_3(\smu{i} , \srho{i}) & = \sum_{\slotIndex=1}^\slotUB \smu{i}_\slotIndex \Big( g^i_\slotIndex (\sx{i}_\slotIndex)
    + \sum_{j\in\nbrs_i} (\slambda{ij}(t) - \slambda{ji}(t) )_{\slotIndex} \Big)
    \\
    & \hspace{4.5cm} + \srho{i} ( 1 - \1^\top \smu{i} )
\end{align*}
and rearrange it as
\begin{align*}
  \LL_3(\smu{i} , \srho{i}) \! =\! \sum_{\slotIndex=1}^\slotUB \! \smu{i}_\slotIndex \Big( g^i_\slotIndex (\sx{i}_\slotIndex)
    \!+ \!\! \! \sum_{j\in\nbrs_i} ( \slambda{ij}(t) \!-\! \slambda{ji}(t) )_{\slotIndex} \!-\! \srho{i}\Big) \!+\! \srho{i}\!.
\end{align*}

The dual function $\max_{ \smu{i} \succeq 0 } \LL_3(\smu{i} , \srho{i})$ is
equal to $\srho{i}$ with domain given by the inequalities
$\srho{i} \geq g^i_\slotIndex (\sx{i}_\slotIndex ) +\sum_{j\in\nbrs_i} (\slambda{ij}(t) -
\slambda{ji}(t) )_{\slotIndex}$, $\slotIndex\in\until{\slotUB}$.
Thus, the dual problem is obtained by maximizing the dual function over its
domain giving~\eqref{eq:rho_formulation}, so that the proof follows.
\end{proof}

The next lemma is a second instrumental result.

\begin{lemma}
  Max-min optimization problem~\eqref{eq:maxmin} is the saddle point problem
  associated to problem~\eqref{eq:alg_minimization}
  \begin{align*}
    \min_{\sx{i}, \srho{i}} \: & \: \srho{i}
    \\
    \subj \: & \: \sx{i} \in X^i
    \\
    & \: g^i_\slotIndex (\sx{i}_\slotIndex ) + \sum_{j\in\nbrs_i}  \big( \slambda{ij}(t) - \slambda{ji}(t) \big)_{\slotIndex} \le \srho{i},
    \\
    & \hspace{4.3cm}\slotIndex\in\until{\slotUB}.
  \end{align*}
  Moreover, a primal-dual optimal solution pair of~\eqref{eq:alg_minimization},
  call it $( ( \sx{i} (t+1), \srho{i} (t+1) ), \smu{i}(t+1) )$, exists and
  $( \sx{i} (t+1), \smu{i}(t+1) )$ is a solution of~\eqref{eq:maxmin}.
\label{lem:dual_minmax_equivalence}
\end{lemma}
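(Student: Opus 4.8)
The plan is to show that problem~\eqref{eq:maxmin} is precisely the partial dual of problem~\eqref{eq:alg_minimization}, obtained by dualizing the $\slotUB$ epigraph inequalities while keeping $\sx{i}\in X^i$ undualized, and then to use strong duality to identify that dual with the associated saddle-point problem. First I would form the partial Lagrangian of~\eqref{eq:alg_minimization}, namely $\LL_4(\sx{i},\srho{i},\smu{i}) = \srho{i}\bigl(1-\1^\top\smu{i}\bigr) + \sum_{\slotIndex=1}^\slotUB \smu{i}_\slotIndex\bigl(g^i_\slotIndex(\sx{i}_\slotIndex) + \sum_{j\in\nbrs_i}(\slambda{ij}(t)-\slambda{ji}(t))_\slotIndex\bigr)$, with $\smu{i}\succeq 0$. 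Minimizing $\LL_4$ over $\srho{i}\in\real$ yields $-\infty$ unless $\1^\top\smu{i}=1$ (exactly as in the proof of Lemma~\ref{lem:equivalence_dual_and_initial}), and on the simplex the residual minimization over $\sx{i}\in X^i$ is the inner minimum of~\eqref{eq:maxmin}; hence the dual of~\eqref{eq:alg_minimization} is~\eqref{eq:maxmin}. Equivalently, and reusing earlier work, for fixed $\sx{i}$ the inner $\min_{\srho{i}}$ in~\eqref{eq:alg_minimization} is problem~\eqref{eq:rho_formulation}, which by Lemma~\ref{lem:inner_maximization_dual} equals~\eqref{eq:inner_maximization_problem} with no gap; substituting shows that~\eqref{eq:alg_minimization} is the min-max of the bracketed objective $\phi_i(\sx{i},\smu{i}):=\sum_{\slotIndex=1}^\slotUB \smu{i}_\slotIndex\bigl(g^i_\slotIndex(\sx{i}_\slotIndex) + \sum_{j\in\nbrs_i}(\slambda{ij}(t)-\slambda{ji}(t))_\slotIndex\bigr)$ of~\eqref{eq:maxmin} over $X^i$ and the simplex, whereas~\eqref{eq:maxmin} is the associated max-min.

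To conclude that~\eqref{eq:maxmin} \emph{is} the saddle-point problem associated to~\eqref{eq:alg_minimization} I need the minimax equality $\min\max\phi_i=\max\min\phi_i$. Problem~\eqref{eq:alg_minimization} is convex and Slater's condition holds trivially (for any $\barsx{i}\in X^i$, pick $\bar\rho^i$ large enough to make all $\slotUB$ inequalities strict), so by the strong duality theorem for convex inequality constraints (\cite[Proposition~5.3.1]{bertsekas1999nonlinear}, as used in Lemma~\ref{lem:equivalence_dual_and_initial}) strong duality holds and the dual optimum is attained; since the dual is~\eqref{eq:maxmin}, this gives both the minimax equality and the attainment of the outer maximum. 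The primal optimum of~\eqref{eq:alg_minimization} is attained too: $X^i$ is compact and the $g^i_\slotIndex$ are continuous, so the objective $\srho{i}$ attains its infimum over the nonempty feasible set restricted to a compact sublevel set. Thus a primal optimal pair $(\sx{i}(t+1),\srho{i}(t+1))$ and a dual optimal $\smu{i}(t+1)$ both exist, and, strong duality holding, $\bigl((\sx{i}(t+1),\srho{i}(t+1)),\smu{i}(t+1)\bigr)$ is a saddle point of $\LL_4$.

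Finally I would read off the last claim from the saddle-point property. Dual optimality of $\smu{i}(t+1)$ forces $\1^\top\smu{i}(t+1)=1$ (otherwise the dual value is $-\infty$, contradicting its finiteness, which follows from compactness of $X^i$) and makes $\smu{i}(t+1)$ a maximizer over the simplex of $\min_{\sx{i}\in X^i}\phi_i(\sx{i},\cdot)$, i.e., of the outer problem of~\eqref{eq:maxmin}. Since $\1^\top\smu{i}(t+1)=1$, the $\srho{i}$-term of $\LL_4$ vanishes and $\LL_4(\sx{i},\srho{i},\smu{i}(t+1))=\phi_i(\sx{i},\smu{i}(t+1))$, so the saddle-point inequality $\LL_4\bigl((\sx{i}(t+1),\srho{i}(t+1)),\smu{i}(t+1)\bigr)\le\LL_4\bigl((\sx{i},\srho{i}),\smu{i}(t+1)\bigr)$ for all $\sx{i}\in X^i$ says precisely that $\sx{i}(t+1)$ attains $\min_{\sx{i}\in X^i}\phi_i(\sx{i},\smu{i}(t+1))$, the inner problem of~\eqref{eq:maxmin} at $\smu{i}(t+1)$. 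Hence $(\sx{i}(t+1),\smu{i}(t+1))$ solves~\eqref{eq:maxmin}. The step I expect to be the crux is the minimax interchange (strong duality): the rest is bookkeeping with $\LL_4$, but one must be careful that it is the scalar variable $\srho{i}$ that generates the simplex constraint on $\smu{i}$, and that attainment of both optima rests on compactness of $X^i$ rather than of the $\srho{i}$-axis.
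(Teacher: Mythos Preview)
Your proposal is correct. It coincides almost verbatim with the \emph{alternative proof} the paper itself gives in the Remark immediately following Lemma~\ref{lem:dual_minmax_equivalence}: form the partial Lagrangian $\LL_4$ of~\eqref{eq:alg_minimization}, minimize over $\srho{i}$ to generate the simplex constraint $\1^\top\smu{i}=1$, then minimize over $\sx{i}\in X^i$ and maximize over $\smu{i}\succeq 0$ to recover~\eqref{eq:maxmin}. Your Slater argument for strong duality and existence of a primal-dual pair is exactly what the paper uses in the second half of its main proof.

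The only methodological difference is in how the minimax equality is justified. The paper's \emph{main} proof goes in the opposite direction: it starts from~\eqref{eq:maxmin}, invokes the compact-set minimax theorem (Proposition~\ref{prop:saddle_point}) to swap $\max$ and $\min$, and then applies Lemma~\ref{lem:inner_maximization_dual} to replace the inner linear-program maximization by the $\min_{\srho{i}}$ of~\eqref{eq:rho_formulation}, arriving at~\eqref{eq:alg_minimization}. You instead obtain the interchange from Slater-based strong duality for~\eqref{eq:alg_minimization}. Both routes are standard and equally rigorous here; the paper's main route exploits compactness of $X^i$ and of the simplex via Proposition~\ref{prop:saddle_point}, while yours exploits the affine structure in $\srho{i}$ via Slater. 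Your final paragraph, reading off from the saddle-point inequalities that $(\sx{i}(t+1),\smu{i}(t+1))$ solves~\eqref{eq:maxmin}, is more explicit than the paper's ``from the previous arguments'' and is a welcome clarification.
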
%
\begin{proof}
We give a constructive proof which clarifies how the problem~\eqref{eq:alg_minimization}
is derived from~\eqref{eq:maxmin}.

Define
\begin{align}
\phi(\sx{i},\smu{i}):=\sum_{\slotIndex=1}^\slotUB \smu{i}_\slotIndex \Big(
  g^i_\slotIndex ( \sx{i}_\slotIndex) \! + \! \sum_{j\in\nbrs_i} (\slambda{ij}(t) - \slambda{ji}(t))_\slotIndex \Big)
\end{align}
and note that (i) $\phi(\cdot,\smu{i})$ is closed and convex for all
$\smu{i} \succeq 0$ (affine transformation of a convex function with compact
domain $X^i$) and (ii) $\phi(\sx{i}, \cdot )$ is closed and concave since it is
a linear function with compact domain ($\1^\top \smu{i} = 1$, $\smu{i} \succeq 0$), for
all $\sx{i}\in\real^\slotUB$.
Thus we can invoke Proposition~\ref{prop:saddle_point} which allows us
to switch $\max$ and $\min$ operators, and write
\begin{align}
\begin{split}
  & \max_{ \1^\top \smu{i} = 1,\smu{i} \succeq 0 } \! \bigg(
  \! \min_{\sx{i} \in X^i}
  \sum_{\slotIndex=1}^\slotUB \smu{i}_\slotIndex \Big(
  g^i_\slotIndex ( \sx{i}_\slotIndex) \! + \!\! \sum_{j\in\nbrs_i} \! (\slambda{ij} (t) \!-\! \slambda{ji} (t) )_\slotIndex \! \Big) \! \!\! \bigg)
  \\
  & = \! \!
  \min_{\sx{i} \in X^i} \!\! \bigg( \!
  \max_{ \1^\top \smu{i} = 1,\smu{i} \succeq 0 } \!
  \sum_{\slotIndex=1}^\slotUB \smu{i}_\slotIndex \! \Big( 
  g^i_\slotIndex (\sx{i}_\slotIndex) \! + \!\! \! \sum_{j\in\nbrs_i} \! (\slambda{ij}(t) \!-\!  \slambda{ji} (t) )_\slotIndex \! \Big) \! \! \! \bigg).
\end{split}
\label{eq:minmax}
\end{align}

Since the inner maximization problem depends nonlinearly on $\sx{i}$
(which is itself an optimization variable), it cannot be performed without
also considering the simultaneous minimization over $\sx{i}$. We overcome this issue
by substituting the inner maximization problem with its equivalent dual minimization.
In fact, by Lemma~\ref{lem:inner_maximization_dual} we can rephrase the right
hand side of~\eqref{eq:minmax} as
\begin{align}
  \min_{ \sx{i} \in X^i } \!\! \bigg(
  \min_{
    \stackrel{ \scriptstyle \srho{i}\: : \: g^i_\slotIndex (\sx{i}_\slotIndex) +\sum_{j\in\nbrs_i}
    (\slambda{ij}(t) - \slambda{ji}(t) )_{\slotIndex} \le \srho{i}}{\slotIndex\in\until{\slotUB} }
  } \: \: \srho{i}
\bigg).
\label{eq:min_min_rho}
\end{align}
At this point, a \emph{joint} (constrained) minimization with respect to $\sx{i}$ and $\srho{i}$ can be
simultaneously performed leading to problem~\eqref{eq:alg_minimization}.

To prove the second part, namely that a primal-dual optimal solution pair exists
and solves problem~\eqref{eq:maxmin}, we first notice that
problem~\eqref{eq:alg_minimization} is convex. Indeed, the cost function is
linear and the constraints are convex ($X^i$ is convex as well as the functions
$g^i_\slotIndex (\sx{i}_\slotIndex ) + \sum_{j\in\nbrs_i} \big( \slambda{ij}(t)
- \slambda{ji}(t) \big)_{\slotIndex}$ and $-\srho{i}$).
Then, by using similar arguments as in
Lemma~\ref{lem:equivalence_dual_and_initial}, we can show that the problem
satisfies the Slater's constraint qualification and, thus, strong duality
holds. Therefore, a primal-dual optimal solution pair
$(\sx{i}(t+1), \srho{i}(t+1),\smu{i}(t+1))$ exists and from the previous
arguments $(\sx{i}(t+1),\smu{i}(t+1))$ solves~\eqref{eq:maxmin}, thus concluding
the proof.
\end{proof}

\begin{remark}[Alternative proof of Lemma~\ref{lem:dual_minmax_equivalence}]
  Let $\smu{i}_\slotIndex \ge 0$, $\slotIndex\in\until{\slotUB}$ be (nonnegative)
  Lagrange multipliers associated to the inequality constraints of problem~\eqref{eq:alg_minimization}.
  Then, its (partial) Lagrangian can be written as
  \begin{align*}
     & \LL_4(\srho{i},\sx{i},\smu{i}) =
      \srho{i} + \sum_{\slotIndex = 1}^\slotUB {\smu{i}_\slotIndex} \Big( g^i_\slotIndex (\sx{i}_\slotIndex )
       \\ &\hspace{3.5 cm}
       + \sum_{j\in\nbrs_i} \big( \slambda{ij}(t) - \slambda{ji}(t) \big)_{\slotIndex} - \srho{i} \Big)
  \end{align*}
  and collecting the multiplier $\srho{i}$, we obtain
  \begin{align*}
    \LL_4(\srho{i},\sx{i},\smu{i}) & =
    \srho{i} (1-\sum_{\slotIndex = 1}^\slotUB {\smu{i}_\slotIndex})
    \\
    &\hspace{0.2cm}
    + \! \sum_{\slotIndex = 1}^\slotUB {\smu{i}_\slotIndex} \Big( g^i_\slotIndex (\sx{i}_\slotIndex )
    + \! \! \sum_{j\in\nbrs_i}  \big( \slambda{ij}(t) \!-\! \slambda{ji}(t) \big)_{\slotIndex}\Big).
  \end{align*}
  The minimization of $\LL_4$ with respect to $\srho{i}$ constrains the $1$-norm
  of the dual variable $\smu{i}$ (i.e., $\1^\top\smu{i}=1$). Then, minimizing
  the reminder over $\sx{i} \in X^i$ and maximizing the result over $\smu{i} \succeq 0$
  gives problem~\eqref{eq:maxmin}.~\oprocend
\end{remark}

We point out that in the previous lemma we have shown that the minimization
in~\eqref{eq:alg_minimization} turns out to be equivalent to performing
step~(S1).  
An important consequence of Lemma~\ref{lem:inner_maximization_dual} is that each
iteration of the algorithm can be in fact performed (since a prima-dual optimal
solution pair of \eqref{eq:alg_minimization} exists). This is strictly related
to the result of Lemma~\ref{lem:dual_subgradient_correcteness}. In fact, the
solvability of problem~\eqref{eq:alg_minimization} is equivalent to the
boundedness, at each $t$, of the subgradients of $\eta$. This is ensured,
equivalently, by the compactness of the simplex constraint in
\eqref{eq:dual_subgradient}.

The next corollary is a byproduct of the proof of Lemma~\ref{lem:dual_minmax_equivalence}.
\begin{corollary}
  Let $\srho{i}(t+1)$, for each $i\in\until{N}$, be the optimal cost of
  problem~\eqref{eq:alg_minimization} with fixed values
  $\{\slambda{ij} (t),\slambda{ji} (t) \}_{j\in\nbrs_i}$.  Then, it holds that
  \begin{align}
    \srho{i}(t+1) = \eta^i (\{\slambda{ij} (t) ,\slambda{ji} (t) \}_{j\in\nbrs_i} )
    \label{eq:rho_eta}
  \end{align}
  where $\eta^i$ is defined in~\eqref{eq:eta_definition}.
  \label{cor:eta_rho}
\end{corollary}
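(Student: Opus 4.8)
The plan is to chain together identities that have already been established, so the argument is essentially bookkeeping. I would start from the definition of $\eta^i$ in~\eqref{eq:eta_definition} and substitute the definition of $q^i$ from~\eqref{eq:qi_definition}. This shows that, for the fixed multipliers $\{\slambda{ij}(t),\slambda{ji}(t)\}_{j\in\nbrs_i}$, the scalar $\eta^i(\{\slambda{ij}(t),\slambda{ji}(t)\}_{j\in\nbrs_i})$ is exactly the optimal value of the max-min problem~\eqref{eq:maxmin}.

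Next, I would invoke the constructive content of the proof of Lemma~\ref{lem:dual_minmax_equivalence}: problem~\eqref{eq:maxmin} was transformed into problem~\eqref{eq:alg_minimization} through the chain of \emph{value-preserving} steps~\eqref{eq:minmax}--\eqref{eq:min_min_rho} (the $\min$/$\max$ swap justified by the saddle-point proposition, the dualization of the inner linear program via Lemma~\ref{lem:inner_maximization_dual}, and finally the joint minimization over $(\sx{i},\srho{i})$). Hence the optimal value of~\eqref{eq:maxmin} coincides with the optimal value of~\eqref{eq:alg_minimization}. Since, by definition, $\srho{i}(t+1)$ is the optimal cost of~\eqref{eq:alg_minimization}, combining the two equalities gives~\eqref{eq:rho_eta}, which concludes the proof.

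The argument presents no real obstacle; the only point deserving a line of care is that the "optimal cost of~\eqref{eq:alg_minimization}", the value of the $\min$-$\max$ right-hand side of~\eqref{eq:minmax}, and the value of the original max-min~\eqref{eq:maxmin} are literally the same number — which is guaranteed because strong duality for~\eqref{eq:alg_minimization} (Slater's condition) and the hypotheses of the saddle-point proposition were already verified inside Lemma~\ref{lem:dual_minmax_equivalence}. One may additionally remark, as the surrounding text does, that this identity is consistent with Lemma~\ref{lem:dual_subgradient_correcteness}: the finiteness of $\srho{i}(t+1)$ mirrors the boundedness of the subgradients of $\eta$ obtained from the compact simplex constraint.
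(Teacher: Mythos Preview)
Your proposal is correct and follows essentially the same route as the paper's proof: expand $\eta^i$ via the definition of $q^i$ to identify it with the left-hand side of~\eqref{eq:minmax}, then trace the value-preserving chain~\eqref{eq:minmax}--\eqref{eq:min_min_rho} from Lemma~\ref{lem:dual_minmax_equivalence} to conclude that this value equals the optimal cost $\srho{i}(t+1)$ of~\eqref{eq:alg_minimization}. The only cosmetic difference is the order in which the two ends of the chain are described.
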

\begin{proof}
  To prove the corollary, 
  we first rewrite explicitly the definition of
  $\eta^i (\{\slambda{ij} (t) ,\slambda{ji} (t) \}_{j\in\nbrs_i} )$ given
  in~\eqref{eq:eta_definition}, i.e.,
\begin{align}
\begin{split}
  & \eta^i (\{\slambda{ij} (t) ,\slambda{ji} (t) \}_{j\in\nbrs_i} ) =
  \\
  & \hspace{1.8cm}
  \max_{ \1^\top \smu{i} = 1,\smu{i} \succeq 0 } \! \bigg(
    \! \Big( \min_{\sx{i} \in X^i}
      \sum_{\slotIndex=1}^\slotUB \smu{i}_\slotIndex 
      g^i_\slotIndex ( \sx{i}_\slotIndex) \! \Big)
  \\ 
  & \hspace{3.75cm}
  + \! {\smu{i}}^\top \! \! \sum_{j\in\nbrs_i} \!
  (\slambda{ij} (t) \!-\! \slambda{ji} (t) ) \! \bigg).
\end{split}
\label{eq:eta_explicit}
\end{align}
Then, being $\rho_i(t)$ the optimal cost of problem~\eqref{eq:alg_minimization},
it is also the optimal cost of problem~\eqref{eq:min_min_rho}, which is
equivalent to the right hand side of equation~\eqref{eq:minmax}.
The proof follows by noting that the expression of $\eta^i$ in
\eqref{eq:eta_explicit} is exactly the left hand side of \eqref{eq:minmax} after
rearranging some terms.
\end{proof}

We are now ready to state the main result of the paper, namely the convergence
of the \Dminmax/ distributed algorithm.

\begin{theorem}
  Let $\{ (\sx{i}(t), \srho{i}(t)) \}$,
  $i\in\until{N}$, 
  be a sequence generated by the \Dminmax/ distributed algorithm, with
  $\gamma(t)$ satisfying Assumption~\ref{ass:step-size}.
  Then, the following holds:
  \begin{enumerate}
  \item the sequence $\big\{ \sum_{i=1}^N \rho^i(t) \big\}$ converges to the
    optimal cost $P^\star$ of problem~\eqref{eq:minimax_starting_problem}, and
  \item every limit point of the primal sequence $\{ \sx{i}(t) \}$, with
    $i\in\until{N}$, is an optimal (feasible) solution
    of~\eqref{eq:minimax_starting_problem}.
  \end{enumerate}
  \label{thm:convergence}
\end{theorem}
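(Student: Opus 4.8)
The plan is to combine the three chains of equivalences established so far with the convergence result for the dual subgradient method (Lemma~\ref{lem:dual_subgradient_correcteness}), and then handle primal recovery separately. First I would prove item~(i). By Corollary~\ref{cor:eta_rho}, $\srho{i}(t+1) = \eta^i(\{\slambda{ij}(t),\slambda{ji}(t)\}_{j\in\nbrs_i})$ for each $i$, and by the \Dminmax/ update~\eqref{eq:alg_update} the multipliers $\slambda{ij}$ evolve exactly according to step (S2). Summing over $i\in\until{N}$ and using the definition~\eqref{eq:dual_dual} of $\eta$, we get $\sum_{i=1}^N \srho{i}(t+1) = \eta(\{\slambda{ij}(t)\}_{(i,j)\in\EE})$, i.e.\ the running sum of the $\srho{i}$'s is precisely the dual objective value along the subgradient trajectory. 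Lemma~\ref{lem:dual_subgradient_correcteness} then gives convergence of this quantity to $\eta^\star$, and Lemma~\ref{lem:strong_duality_dualdual} identifies $\eta^\star = q^\star = P^\star$, which is the optimal cost of~\eqref{eq:minimax_starting_problem} (the epigraph reformulation~\eqref{eq:starting_problem} shares its optimal value). This settles item~(i).

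For item~(ii), let $\sx{i}(t_k)$ be a convergent subsequence with limit $\bar{\sx{i}}$ for every $i$; such subsequences exist because each $X^i$ is compact, so it suffices to work along a common subsequence realizing simultaneous convergence of all $N$ blocks. Feasibility of the limit is immediate: $\sx{i}(t_k)\in X^i$ for all $k$ and $X^i$ is closed, so $\bar{\sx{i}}\in X^i$. The substantive part is optimality. I would argue as follows: by Lemma~\ref{lem:dual_minmax_equivalence}, $((\sx{i}(t+1),\srho{i}(t+1)),\smu{i}(t+1))$ is a primal-dual optimal pair of~\eqref{eq:alg_minimization}, so complementary slackness and primal feasibility for~\eqref{eq:alg_minimization} hold at every $t$; in particular $g^i_\slotIndex(\sx{i}_\slotIndex(t+1)) + \sum_{j\in\nbrs_i}(\slambda{ij}(t)-\slambda{ji}(t))_\slotIndex \le \srho{i}(t+1)$ for all $\slotIndex$. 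Define $\bar P := \sum_{i=1}^N \bar\srho{i}$, where $\bar\srho{i}$ is the limit of $\srho{i}(t_k)$ along a further subsequence (again using boundedness, which follows from compactness of $X^i$ and continuity of the $g^i_\slotIndex$). By item~(i), $\bar P = P^\star$. Summing the per-node inequalities over $i$ and letting $k\to\infty$, the coupling terms $\sum_{i}\sum_{j\in\nbrs_i}(\slambda{ij}-\slambda{ji})_\slotIndex$ telescope to zero by the same symmetry used to derive~\eqref{eq:lagrangian_rearrangement}, yielding $\sum_{i=1}^N g^i_\slotIndex(\bar\sx{i}_\slotIndex) \le \bar P = P^\star$ for every $\slotIndex\in\until{\slotUB}$. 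Hence $(\bar{\sx{1}},\ldots,\bar{\sx{N}},P^\star)$ is feasible for~\eqref{eq:starting_problem} with cost $P^\star$, so it is optimal, and therefore each $\bar{\sx{i}}$ is part of an optimal solution of~\eqref{eq:minimax_starting_problem}.

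The main obstacle I anticipate is the telescoping/limiting argument in item~(ii): one must be careful that the $\slambda{ij}(t)$ need not converge (only the dual \emph{objective} is guaranteed to converge, not the dual \emph{iterates}), so the cancellation of the $\lambda$-terms must be done at finite $t_k$ \emph{before} passing to the limit, exploiting that $\sum_{i}\sum_{j\in\nbrs_i}(\slambda{ij}(t_k)-\slambda{ji}(t_k))_\slotIndex = 0$ identically by edge symmetry of $\GG$. A second, more delicate point is justifying that the subsequential limit $\bar\srho{i}$ of $\srho{i}(t_k)$ summed over $i$ equals $P^\star$: this needs that the \emph{whole} sequence $\sum_i \srho{i}(t)$ converges (item~(i)), so every subsequential limit of the sum is $P^\star$, even though individual $\srho{i}(t_k)$ limits are only extracted along a further subsequence. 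Once these two bookkeeping issues are dispatched, the rest is a routine closedness/continuity argument.
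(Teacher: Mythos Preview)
Your proposal is correct and follows essentially the same route as the paper: identify \Dminmax/ with the dual subgradient updates (S1)--(S2) via Lemma~\ref{lem:dual_minmax_equivalence} and Corollary~\ref{cor:eta_rho}, deduce item~(i) from Lemma~\ref{lem:dual_subgradient_correcteness} and Lemma~\ref{lem:strong_duality_dualdual}, and for item~(ii) sum the local feasibility inequalities of~\eqref{eq:alg_minimization}, cancel the $\slambda{}$-terms at finite $t$ by undirected-graph symmetry, and pass to the limit using continuity of the $g^i_\slotIndex$ and item~(i). The only cosmetic difference is that you extract further subsequential limits $\bar\srho{i}$ of the individual $\srho{i}(t_k)$, which is unnecessary: since item~(i) gives convergence of the \emph{whole} sequence $\sum_i \srho{i}(t)$, you can pass to the limit directly in the sum without ever needing the individual $\bar\srho{i}$'s, exactly as the paper does.
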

\begin{proof}
  We prove the theorem by combining all the results given in the previous
  lemmas.

  First, for each $i\in\until{N}$, let $\{\smu{i}(t) \}$, and
  $\{\slambda{ij}(t)\}$, $j\in \nbrs_i$, be the auxiliary sequences defined in
  the \Dminmax/ distributed algorithm associated to $\{ (\sx{i}(t), \srho{i}(t)) \}$.
  From Lemma~\ref{lem:dual_minmax_equivalence} a primal-dual optimal solution
  pair $( ( \sx{i} (t+1), \srho{i} (t+1) ), \smu{i}(t+1) )$
  of~\eqref{eq:alg_minimization} in fact exists (so that the algorithm is
  well-posed) and $( \sx{i} (t+1), \smu{i}(t+1) )$
  solves~\eqref{eq:maxmin}. Recalling that solving~\eqref{eq:maxmin} is
  equivalent to solving \eqref{eq:dual_subgradient}, it follows that
  $\smu{i}(t+1)$ in the \Dminmax/ implements step (S1) of
  the dual subgradient (S1)-(S2).
  Noting that update~\eqref{eq:alg_update} of $\slambda{ij}$ is exactly step
  (S2), it follows that \Dminmax/ is an operative way to implement the dual
  subgradient algorithm (S1)-(S2).
  From Lemma~\ref{lem:dual_subgradient_correcteness} the algorithm converges in
  objective value, that is
\begin{align*}
\lim_{t\to\infty}\sum_{i=1}^N \eta^i (\{\slambda{ij} (t) ,\slambda{ji} (t)
  \}_{j\in\nbrs_i} )=\eta^\star=P^\star,
\end{align*}
where the second equality follows from Lemma~\ref{lem:strong_duality_dualdual}.
Then, we notice that from Corollary~\ref{cor:eta_rho}
\begin{align*}
\sum_{i=1}^N \rho^i(t) = \sum_{i=1}^N \eta^i (\{\slambda{ij} (t)
  ,\slambda{ji} (t) \}_{j\in\nbrs_i} ) \qquad \forall\, t\geq0,
\end{align*}
so that $\lim_{t\to\infty}\sum_{i=1}^N \rho^i(t) = P^\star$,
thus concluding the proof of the first statement.

To prove the second statement, we show that every limit point of the (primal)
sequence $\{ \sx{i}(t) \}$, $i\in\until{N}$, is feasible and optimal for
problem~\eqref{eq:minimax_starting_problem}.

For analysis purposes, let us introduce the sequence $\{P(t)\}$ defined as
  \begin{align}
    P(t) := \max_{\slotIndex \in\until{\slotUB}} \sum_{i=1}^N g^i_\slotIndex (\sx{i}_\slotIndex(t) )
    \label{eq:Pt_definition}
  \end{align}
  for each $t\ge 0$. 
  Notice that $P(t)$ is also the cost of problem~\eqref{eq:starting_problem}
  associated to $[ \sx{1} (t),\ldots, \sx{N} (t), P(t) ]$ and thus, by
  definition of optimality, satisfies
  \begin{align}
    P^\star \le P(t)
    \label{eq:cost_minorization}
  \end{align}
   for all $t\ge 0$.

  By summing over $i\in\until{N}$ both sides of inequality constraints
  in~\eqref{eq:alg_minimization}, at each $t\geq0$ the following holds
  \begin{align}
    \sum_{i=1}^N \Big( g^i_\slotIndex (\sx{i}_\slotIndex(t) ) -\!\! \sum_{j\in\nbrs_i} (\slambda{ij}(t) - \slambda{ji}(t) )_{\slotIndex} \Big)
    \le \! \sum_{i=1}^N \srho{i}(t).
    \label{eq:proof_rho_property}
  \end{align}
  Let us denote $a_{ij}$ the $(i,j)$-th entry of the adjacency matrix associated
  to the undirected graph $\GG$. Then, we can write
  \begin{align*}
    \sum_{i=1}^N \sum_{j\in\nbrs_i} &(\slambda{ij}(t) - \slambda{ji}(t) )\\
   & =  \sum_{i=1}^N \sum_{j=1}^N a_{ij}(\slambda{ij}(t) - \slambda{ji}(t) ) \\
    &=\sum_{i=1}^N \sum_{j=1}^N a_{ij}\slambda{ij}(t) - \sum_{i=1}^N \sum_{j=1}^N a_{ij}\slambda{ji}(t).
  \end{align*}
Since the graph $\GG$ is undirected $a_{ij} = a_{ji}$ for all $(i,j)\in\EE$ and thus
  \begin{align*}
\sum_{i=1}^N \sum_{j\in\nbrs_i} &(\slambda{ij}(t) - \slambda{ji}(t) )\\
   & =\sum_{i=1}^N \sum_{j=1}^N a_{ij}\slambda{ij}(t) - \sum_{i=1}^N
     \sum_{j=1}^N a_{ji}\slambda{ji}(t) = 0.
  \end{align*}
Hence,~\eqref{eq:proof_rho_property} reduces to
  \begin{align}
    \sum_{i=1}^N g^i_\slotIndex (\sx{i}_\slotIndex(t) )
    \le \sum_{i=1}^N \srho{i}(t),
  \label{eq:feasibility_condition}
  \end{align}
  for all $\slotIndex \in\until{\slotUB}$ and $t\ge 0$.

  For all $i\in\until{N}$, since $\{ \sx{i}(t)\}$ is a bounded sequence in $X^i$, then there
  exists a convergent sub-sequence $\{ \sx{i}(t_n) \}$. Let $\barsx{i}$ be its limit point.
  Since each $g^i_\slotIndex$ is a (finite) convex function over $\real$, it is
  also continuous over any compact subset of $\real$
  and, taking the limit of~\eqref{eq:feasibility_condition}, we can write
  \begin{align}
  \begin{split}
    \lim_{n\to\infty} \sum_{i=1}^N g^i_\slotIndex ( \sx{i}_\slotIndex(t_n) )
    & =\sum_{i=1}^N g^i_\slotIndex \Big( \lim_{n\to\infty} \sx{i}_\slotIndex(t_n) \Big)
    \\ &
    = \sum_{i=1}^N g^i_\slotIndex ( \barsx{i}_\slotIndex )
    \le \lim_{n\to\infty} \sum_{i=1}^N \srho{i}(t_n) = P^\star
  \end{split}
  \label{eq:feasibility_limit}
  \end{align}
  for $\slotIndex \in\until{\slotUB}$, where the last equality follows from the
  first statement of the theorem.
  Since the sub-sequence $\{ \sx{i}(t_n) \}$ is arbitrary, we have shown that every limit point
  $\barsx{i}$, $i \in\until{N}$, is feasible.

  To show optimality, first notice that in light of
  conditions~\eqref{eq:cost_minorization} and~\eqref{eq:feasibility_condition}
  the following holds
  \begin{align}
    P^\star \le P(t) = \max_{\slotIndex \in\until{\slotUB}} \sum_{i=1}^N g^i_\slotIndex (\sx{i}_\slotIndex(t) )
    \le \sum_{i=1}^N \srho{i}(t).
    \label{eq:carabinieri}
  \end{align}
  Therefore, taking any convergent sub-sequence $\{ \sx{i}(t_n) \}$ (with limit
  point $\barsx{i}$) in \eqref{eq:carabinieri}, the limit as $n\to\infty$ satisfies
  \begin{align}
    P^\star \!\! \le \!\!\lim_{n\to\infty} \!\! \bigg( \! \max_{\slotIndex \in \until{\slotUB}}
    \sum_{i=1}^N g^i_\slotIndex (\sx{i}_\slotIndex (t_n) ) \! \bigg)
    \!\! \le\!\! \lim_{n\to\infty} \sum_{i=1}^N \srho{i}(t_n) = P^\star.
  \end{align}
  By noting that the maximization is over a finite set and recalling that $g^i_\slotIndex$
  is continuous over any compact subset of $\real$, it follows
  \begin{align}
    P^\star \le \max_{\slotIndex \in \until{\slotUB}}\sum_{i=1}^N
    g^i_\slotIndex (\barsx{i}_\slotIndex ) \leq P^\star
  \label{eq:cost_bounds}
  \end{align}
  proving that any limit point $\barsx{i}$, $i \in\until{N}$, is also optimal,
  thus concluding the proof.
\end{proof}


\section{Numerical Simulations}
\label{sec:simulations}

In this section we propose a numerical example in which we apply the proposed
method to a network of Thermostatically Controlled Loads (TCLs) (such as air conditioners, 
heat pumps, electric water heaters),~\cite{Alizadeh2015reduced}.

The dynamical model of the $i$-th device is given by
\begin{align}
  \dot{T}^{i}(\tau)= -\alpha \left(T^{i}(\tau)-T^{i}_{out}(\tau)\right)+\delta^{i}(\tau)+ Qx^{i}(\tau),
  \label{eq:agent_model}
\end{align}
where $T_i(\tau)\geq0$ is the temperature, $\alpha > 0$ is a parameter depending
on geometric and thermal characteristics, $T^{i}_{out}(\tau)$ is the air
temperature outside the device, $\delta^{i}(\tau)$ represents a known
time-varying forcing term onto the internal temperature of the device,
$x^{i}(\tau)\in [0,1 ]$ is the control input, and $Q>0$ is a scaling factor.

We consider a discretized version of the system with constant input over the
sampling interval $\Delta\tau$, i.e., $x^{i}(\tau)=\sx{i}_{\slotIndex}$ for
$\tau\in \left[\slotIndex \Delta\tau, (\slotIndex+1) \Delta\tau \right)$, and
sampled state $T^{i}_{\slotIndex}$,
\begin{align}
  T^{i}_{\slotIndex+1} =
  T^{i}_{\slotIndex} e^{-\alpha \Delta\tau} \!+\! \left(1 - e^{-\alpha \Delta\tau}\right)\!\!
  \left(
    \frac{Q}{\alpha} \sx{i}_{\slotIndex} \!+\! \frac{\delta^i_{\slotIndex}}{\alpha}
    \!+\! T^{i}_{out,\slotIndex}
  \! \right)\!.
  \label{eq:agent_model_discrete_time}
\end{align}
Moreover, we constrain the temperature to stay within a given interval
$[ T_{min}, T_{max} ]$.

The constraints due to the dynamics and the bound on the temperature can be
written as inequality constraints on the input in the form $A_i\sx{i}\leq b_i$,
for each agent $i\in\until{N}$.
To construct $A_i$ and $b_i$, let us denote $\hat{A}=e^{-\alpha \Delta \tau}$ and
$\hat{B}=1-e^{-\alpha \Delta \tau}$. 
We can compute the trajectory of $T^{i}_\slotIndex$ as
a function of $\sx{i}_{\slotIndex}$, $\delta^i_\slotIndex$, $T^{i}_{out,\slotIndex}$ and $T^{i}_0$ as follows. 
Let $\delta^i, \bar{T}^{i}, \bar{T}^{i}_{out}\in \real^\slotUB$ be vectors whose $\slotIndex$-th element
corresponds respectively to $\delta^i_\slotIndex$, $T^{i}_\slotIndex$ and $T^{i}_{out,\slotIndex}$. Then, based
on~\eqref{eq:agent_model_discrete_time} it holds
\renewcommand*{\arraycolsep}{1pt}
\begin{align*}
  \bar{T}^i & =
  \underbrace{
  \begin{bmatrix}
    \hat{B} &  0 & \cdots & 0 \\
    \hat{A}\hat{B} & \hat{B} & \cdots & 0 \\
    \vdots \\
    \hat{A}^{\slotUB}\hat{B}  & \hat{A}^{\slotUB-1}\hat{B}  & \cdots & \hat{B}
  \end{bmatrix}
  }_{F} \!\!
  \Big( \!\! -\! \bar{T}^i_{out} \!+\! \frac{\delta^i}{\alpha} \!+\! \frac{Q}{\alpha} \sx{i} \Big)
  \!+\! \underbrace{
  \begin{bmatrix}
    \hat{A} \\
    \hat{A}^2 \\
    \vdots \\
    \hat{A}^{\slotUB }
  \end{bmatrix}
  }_{G} \! T^{i}_0.
\end{align*}

Thus, the matrix $A_i$ and the vector $b_i$ turn out to be
\begin{align*}
  A_i =
  \begin{bmatrix}
    \dfrac{Q}{\alpha}F \\[0.8em]
    -\dfrac{Q}{\alpha} F
  \end{bmatrix},
  \quad
  b_i =
  \begin{bmatrix}
    T_{max} \1 -G T^i_0 + F \bar{T}^i_{out} + F\dfrac{\delta^i}{\alpha} \\[0.8em]
    -T_{min} \1 +G T^i_0 -F \bar{T}^i_{out} - F\dfrac{\delta^i}{\alpha} 
  \end{bmatrix}.
\end{align*}

We assume that the power consumption $g^i_\slotIndex ( \sx{i}_\slotIndex)$ of the $i$-th
device in the $\slotIndex$-th slot $[ \slotIndex \Delta\tau, (\slotIndex+1) \Delta\tau ]$ is directly 
proportional to $\sx{i}_\slotIndex$, i.e., $g^i_\slotIndex (\sx{i}_\slotIndex ) = c^i \sx{i}_\slotIndex$.

Thus, optimization problem~\eqref{eq:minimax_starting_problem} for this scenario is
\begin{align}
\begin{split}
  \min_{\sx{1}, \ldots, \sx{N},P}  \: & \: P 
  \\[1.2ex]
  \subj \: & \:  A_i \sx{i} \preceq b_i, \; \sx{i}\in[0,1]^\slotUB, \hspace{0.2cm} i\in\until{N}\\[1.2ex]
           \: & \: \sum_{i=1}^N c^i \sx{i}_\slotIndex \leq P, \hspace{1.6cm} \slotIndex\in\until{\slotUB},
\end{split}
\label{eq:simulated_problem}
\end{align}
where $A_i$ and $b_i$ encode the constraints due to the discrete-time dynamics,
the temperature constraint $T^i_\slotIndex \in [ T_{min}, T_{max} ]$ and the
known forcing term $\delta^i_s$.  Notice that the local constraint set is
$X^i := \big\{ \sx{i} \in \real^\slotUB \mid A_i \sx{i} \preceq b_i \text{ and }
\sx{i}\in[0,1]^\slotUB \big\}$.

We choose each $\delta^i_{\slotIndex}$ to be constant for an interval of $5$
slots and zero otherwise. The nonzero values are set in the central part of the
entire simulation horizon $\until{S}$ by randomly shifting the center.
Then, we randomly choose the heterogeneous power consumption coefficient
$c^i\in \real$ of each device from a set of five values, drawn from a uniform
distribution in $[ 1,3 ]$.
Finally, we consider $N=20$ agents communicating
according to an undirected connected Erd\H{o}s-R\'enyi random graph $\GG$ with
parameter $0.2$. We consider a horizon of $\slotUB=60$. Finally, we
used a diminishing step-size sequence in the form $\gamma(t) = t^{-0.8}$,
satisfying Assumption~\ref{ass:step-size}.

In Figure~\ref{fig:rho} we show the evolution at each algorithm iteration $t$ of
the local objective functions $\srho{i}(t)$, $i\in\until{N}$, (solid lines)
which converge to stationary values.
Moreover, we also plot their sum $\sum_{i=1}^N\srho{i}(t)$ (dashed line) and the
value $P(t)$ (dotted line), introduced in~\eqref{eq:Pt_definition}. As proven in
Corollary~\ref{cor:eta_rho}, both of them asymptotically converge to the
centralized optimal cost $P^\star$ of problem~\eqref{eq:simulated_problem}. It
is worth noting that, at each iteration $t$, the curve $P(t)$ stays above the
optimal value $P^\star$ and below the curve $\sum_{i=1}^N\srho{i}(t)$, i.e.,
condition~\eqref{eq:carabinieri} is satisfied.
\begin{figure}[!htbp]
\centering
  \includegraphics[scale=1]{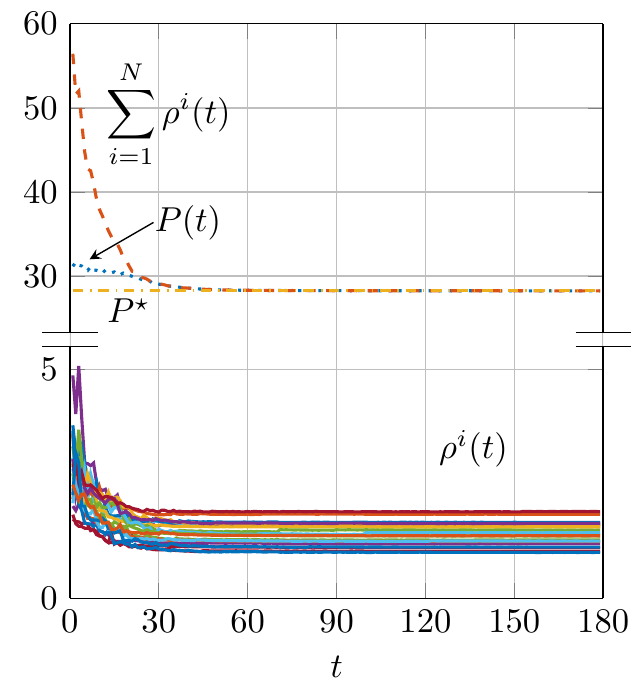}
  \caption{
    Evolution of $\srho{i}(t)$, $i\in\until{N}$, (solid lines), their sum
    $\sum_{i=1}^N\srho{i}(t)$ (dashed line), $P(t)$ (dotted line) and
    (centralized) optimal cost $P^\star$ (dash-dotted line).
    }
  \label{fig:rho}
\end{figure}

In Figure~\ref{fig:xx} the local solutions at the last algorithm iteration are
depicted. We denote them ${\sx{i}}^\star$, $i\in\until{N}$, to highlight that
they satisfy the cost optimality up to the required tolerance $10^{-3}$.
We also plot the resulting aggregate optimal consumption, i.e.,
$\sum_{i=1}^N c^i {\sx{i}}^\star$, which, as expected, in fact shaves off the
power demand peak.
\begin{figure}[!htbp]
\centering
  \includegraphics[scale=1]{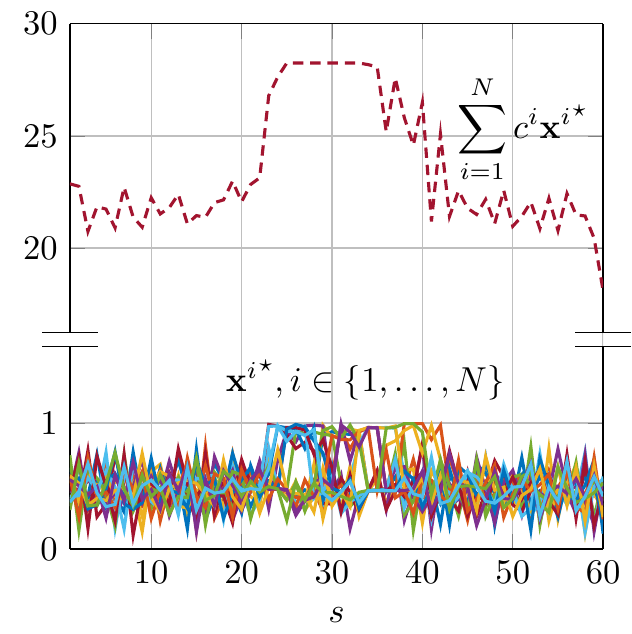}
  \caption{
    Profile of optimal solutions ${\sx{i}}^\star$ (solid lines), and $\sum_{i=1}^N c^i  {\sx{i}_\slotIndex}^\star$
    (dashed line) over the horizon $\until{\slotUB}$.
    }
  \label{fig:xx}
\end{figure}

Moreover, the optimal local solutions satisfy the box constraint $[0,1]$ for
each slot $\slotIndex \in\until{\slotUB}$. In fact, as we have proven, the
algorithm converges in an interior point fashion, i.e., the local constraint at
each node $i\in\until{N}$, is satisfied at all the algorithm iterations. As an
example, in Figure~\ref{fig:xx1_t} we depict the behavior of the components of
$\sx{1}(t)$.
\begin{figure}[!htbp]
\centering
  \includegraphics[scale=1]{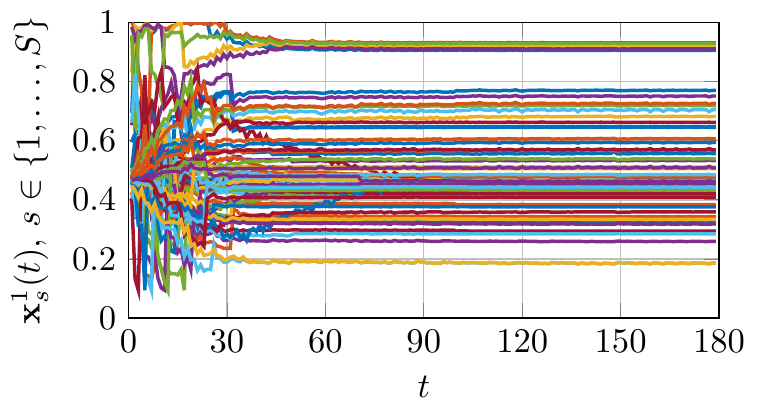}
  \caption{
    Evolution of $\sx{1}_\slotIndex(t)$, $\slotIndex\in \until{\slotUB}$.
    }
  \label{fig:xx1_t}
\end{figure}

In Figure~\ref{fig:primal_violations} (left) we show, the violation of the
coupling constraints, for all $\slotIndex\in\until{\slotUB}$ at each iteration $t$. As
expected, the violations asymptotically go to nonnegative values, consistently
with the asymptotic primal feasibility proven in the previous section.

In Figure~\ref{fig:primal_violations} (right) the difference
$\sum_{i=1}^N \left(g^i_s(\sx{i}_s(t)) - \srho{i}(t)\right)$ is also shown,
which is always nonnegative consistently with
equation~\eqref{eq:feasibility_condition}.
\begin{figure}[!htbp]
  \centering
  \includegraphics[scale=1]{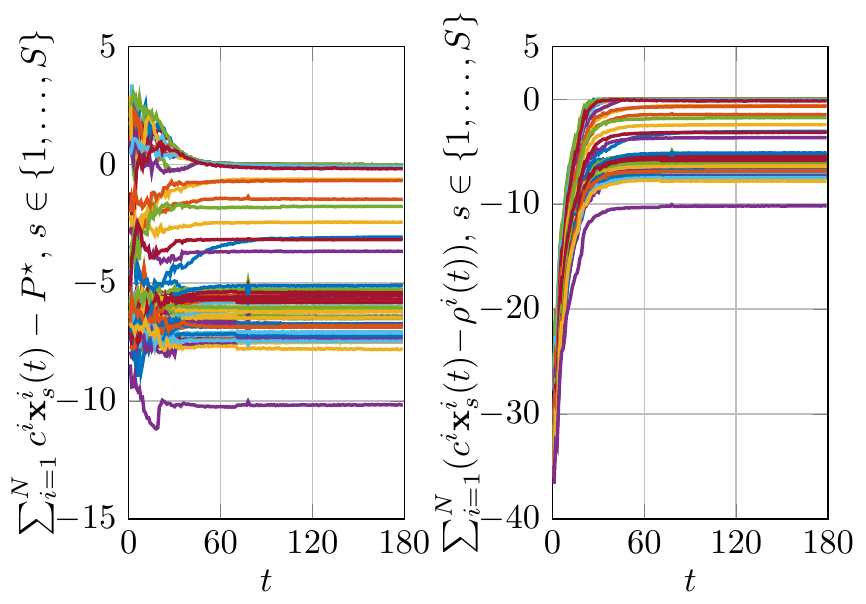}
  \caption{
    Evolution of primal violations of solutions ${\sx{i}}(t)$, $i\in\until{N}$.
  }
  \label{fig:primal_violations}
\end{figure}

Finally, in Figure~\ref{fig:cost} it is shown the convergence rate of the
distributed algorithm, i.e., the difference between the centralized optimal
cost  $P^\star$ and the sum of the local costs $\sum_{i=1}^N \srho{i} (t)$,
in logarithmic scale.  It can be seen that the proposed algorithm converges to
the optimal cost with a sublinear rate $O(1/\sqrt{t})$ as expected for a
subgradient method. Notice that the cost error is not monotone since the
subgradient algorithm is not a descent method. 
\begin{figure}[!htbp]
\centering
  \includegraphics[scale=1]{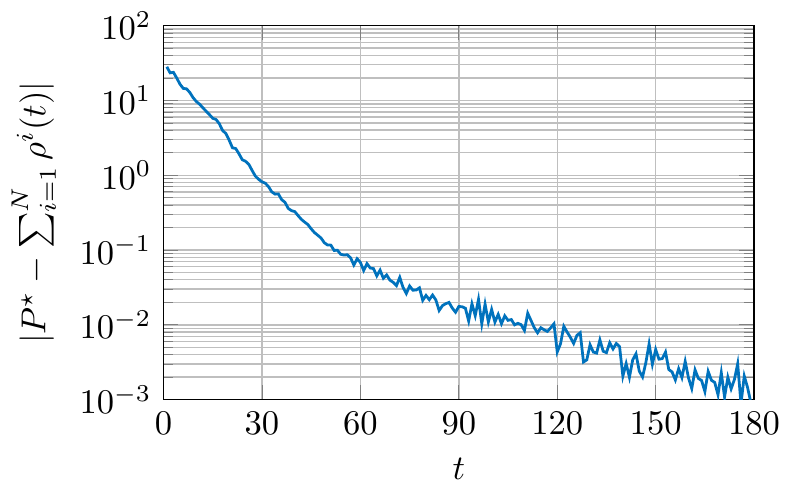}
  \caption{
    Evolution of the cost error, in logarithmic scale.
    }
  \label{fig:cost}
\end{figure}


\section{Conclusions}
\label{sec:conclusions}
In this paper we have introduced a novel distributed min-max optimization
framework motivated by peak minimization problems in Demand Side
Management. Standard distributed optimization algorithms cannot be applied to
this problem set-up due to a highly nontrivial coupling in the objective
function and in the constraints.
We proposed a distributed algorithm based on the combination of duality methods
and properties from min-max optimization.
Specifically, by means of duality theory, a series of equivalent problems are
set-up, which lead to a separable and sparse optimization problem. A subgradient
method applied to the resulting problem results into a distributed algorithm by
suitably applying properties from min-max optimization.  Despite the complex
derivation, the algorithm has a very simple structure at each node.
Theoretical results are corroborated by a numerical example on peak minimization
in Demand Side Management.


\appendix
\renewcommand{\theequation}{\thesubsection.\arabic{equation}}
\renewcommand{\thetheorem}{\thesubsection.\arabic{theorem}}

\subsection{Optimization and Duality}
\label{app:optimization_duality}
Consider a constrained optimization problem, addressed as primal problem,
having the form
\begin{align}
\begin{split}
  \min_{ z \in Z } \:& \: f(z)
  \\
  \subj \: & \: g(z) \preceq 0
\end{split}
\label{eq:appendix_primal}
\end{align}
where $Z \subseteq \real^N$ is a convex and compact set,
$\map{f}{\real^N}{\real}$ is a convex function and $\map{g}{\real^N}{\real^\slotUB}$
is such that each component $\map{g_\slotIndex}{\real^N}{\real}$,
$\slotIndex \in \until{\slotUB}$, is a convex function.

The following optimization problem
\begin{align}
\begin{split}
  \max_{\mu} \:& \: q(\mu)
  \\
  \subj \: & \: \mu \succeq 0
\end{split}
\label{eq:appendix_dual}
\end{align}
is called the dual of problem~\eqref{eq:appendix_primal}, where
$\map{q}{\real^\slotUB}{\real}$ is obtained by minimizing with respect to $z \in Z$
the Lagrangian function $\LL (z,\mu) := f(z) + \mu^\top g(z)$, i.e.,
$q(\mu) = \min_{z \in Z} \LL(z,\mu)$. Problem~\eqref{eq:appendix_dual} is
well posed since the domain of $q$ is convex and $q$ is concave on its domain.

It can be shown that the following inequality holds
\begin{align}
  \inf_{z \in Z } \sup_{\mu \succeq 0} \LL(z, \mu) \ge \sup_{\mu\succeq 0} \inf_{z \in X } \LL(z,\mu),
  \label{eq:appendix_weak_duality}
\end{align}
which is called weak duality.
When in~\eqref{eq:appendix_weak_duality} the equality holds, then we say
that strong duality holds and, thus, solving the primal
problem~\eqref{eq:appendix_primal} is equivalent to solving its dual
formulation~\eqref{eq:appendix_dual}. In this case the right-hand-side
problem in~\eqref{eq:appendix_weak_duality} is referred to as
\emph{saddle-point problem} of \eqref{eq:appendix_primal}.

\begin{definition}
  A pair $(z^\star , \mu^\star)$ is called a primal-dual optimal solution of
  problem~\eqref{eq:appendix_primal} if $z^\star\in Z$ and
  $\mu^\star\succeq 0$, and $(z^\star , \mu^\star)$ is a saddle point of the
  Lagrangian, i.e.,
  \begin{align*}
    \LL (z^\star,\mu ) \le \LL (z^\star,\mu^\star) \le \LL (z,\mu^\star)
  \end{align*}
  for all $z\in Z$ and $\mu\succeq 0$.\oprocend
\label{def:primal_dual_pair}
\end{definition}

A more general min-max property can be stated. Let $Z \subseteq \real^N$ and
$W \subseteq \real^\slotUB$ be nonempty convex sets.  Let
$\phi : Z \times W \to \real$, then the following inequality
\begin{align*}
  \inf_{z\in Z} \sup_{w \in W}  \phi (z,w) \ge \sup_{w \in W} \inf_{z\in Z} \phi (z,w)
\end{align*}
holds true and is called the \emph{max-min} inequality. When the equality holds, then
we say that $\phi$, $Z$ and $W$ satisfy the \emph{strong max-min} property or the
\emph{saddle-point} property.

The following theorem gives a sufficient condition for the strong max-min property
to hold.

\begin{proposition}[{\cite[Propositions~4.3]{bertsekas2009min}}]
  Let $\phi$ be such that (i) $\map{\phi (\cdot,w)}{Z}{\real}$ is convex and closed
  for each $w \in W$, and (ii) $\map{-\phi(z,\cdot)}{W}{\real}$ is convex and closed for
  each $z \in Z$.
  Assume further that $W$ and $Z$ are convex compact sets. Then
  \begin{align*}
    \sup_{w \in W} \inf_{z \in Z} \phi (z,w) = \inf_{z \in Z} \sup_{w \in W}  \phi (z,w)
  \end{align*}
  and the set of saddle points is nonempty and compact.~\oprocend
  \label{prop:saddle_point}
\end{proposition}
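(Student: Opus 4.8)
The plan is to split the claimed equality into the two opposite inequalities. One direction, $\sup_{w\in W}\inf_{z\in Z}\phi(z,w)\le \inf_{z\in Z}\sup_{w\in W}\phi(z,w)$, is the \emph{max-min inequality} and requires none of the hypotheses: for any fixed $\bar z\in Z$ and $\bar w\in W$ we have $\inf_{z\in Z}\phi(z,\bar w)\le \phi(\bar z,\bar w)\le \sup_{w\in W}\phi(\bar z,w)$, and taking the supremum over $\bar w$ on the left and then the infimum over $\bar z$ on the right gives the bound. All the structure of the statement is needed only for the reverse inequality $\inf_{z\in Z}\sup_{w\in W}\phi(z,w)\le \sup_{w\in W}\inf_{z\in Z}\phi(z,w)$.

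\noindent For the reverse inequality I would invoke a minimax theorem. Hypothesis (i) says $\phi(\cdot,w)$ is convex and closed, hence quasiconvex and lower semicontinuous on $Z$; hypothesis (ii) says $-\phi(z,\cdot)$ is convex and closed, i.e. $\phi(z,\cdot)$ is concave (hence quasiconcave) and upper semicontinuous on $W$; and $Z$, $W$ are convex and compact. These are exactly the assumptions of Sion's minimax theorem, which then yields $\inf_{z\in Z}\sup_{w\in W}\phi(z,w)=\sup_{w\in W}\inf_{z\in Z}\phi(z,w)$. Alternatively, one can reproduce the min common / max crossing argument of Bertsekas: form the perturbation (value) function associated with the inner minimization, use convexity to apply a supporting-hyperplane theorem, and use compactness of $W$ to guarantee that the relevant crossing set is closed and that the max-crossing value is attained, which rules out a duality gap. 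I would present the Sion-based version for brevity.

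\noindent Finally, for existence and compactness of the saddle-point set: the map $z\mapsto \sup_{w\in W}\phi(z,w)$ is lower semicontinuous on the compact set $Z$ (a supremum of lsc functions), so its infimum is attained on a nonempty compact set $Z^\star\subseteq Z$; symmetrically $w\mapsto \inf_{z\in Z}\phi(z,w)$ is upper semicontinuous on the compact set $W$ (an infimum of usc functions), so its supremum is attained on a nonempty compact set $W^\star\subseteq W$. Using the equality just proved, any $(z^\star,w^\star)\in Z^\star\times W^\star$ satisfies $\sup_{w\in W}\phi(z^\star,w)=\inf_{z\in Z}\sup_{w\in W}\phi(z,w)=\sup_{w\in W}\inf_{z\in Z}\phi(z,w)=\inf_{z\in Z}\phi(z,w^\star)$, and since the left side is $\ge\phi(z^\star,w^\star)$ and the right side is $\le\phi(z^\star,w^\star)$, all these quantities coincide, which forces $\phi(z^\star,w)\le\phi(z^\star,w^\star)\le\phi(z,w^\star)$ for all $z\in Z$, $w\in W$; conversely every saddle point lies in $Z^\star\times W^\star$. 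Hence the saddle-point set equals $Z^\star\times W^\star$, a product of nonempty compact sets, and is therefore nonempty and compact. The main obstacle is the reverse minimax inequality: weak duality, attainment by compactness, and the product description of the saddle set are all routine, whereas $\inf\sup\le\sup\inf$ genuinely requires both the convexity/concavity and a compactness/closedness input and is where a nontrivial theorem (Sion, or the min common–max crossing machinery) must be used.
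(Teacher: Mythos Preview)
The paper does not prove this proposition; it is quoted verbatim from Bertsekas and stated without proof (the trailing $\square$ marks the end of the statement, not of a proof). So there is no ``paper's own proof'' to compare against; your proposal is a self-contained argument for a result the paper only cites.

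Your argument is correct. The weak direction is the standard max--min inequality. For the reverse inequality, the hypotheses (convex--concave, closed in each argument, both domains convex and compact) do place you squarely inside Sion's minimax theorem, so invoking it is legitimate. Your existence/compactness argument is also sound: $z\mapsto\sup_{w\in W}\phi(z,w)$ is lsc on compact $Z$ (supremum of lsc functions), hence attains its infimum on a nonempty closed, hence compact, set $Z^\star$; symmetrically for $W^\star$; and the saddle set equals $Z^\star\times W^\star$ by the chain of equalities you wrote. One small remark: to conclude $Z^\star$ is compact you are using that the sublevel set $\{z:\sup_w\phi(z,w)\le\alpha\}$ is closed (by lsc) and contained in the compact $Z$, which is fine but worth saying explicitly.

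As for the comparison you yourself raise: the cited source (Bertsekas) derives this via the \emph{min common / max crossing} duality framework rather than Sion. That route builds the perturbation function, shows its epigraph is convex and (using compactness) that the min-common point is attained and the max-crossing value equals it, then reads off the saddle-point conclusion. It is more machinery but more self-contained, since Sion's theorem is itself a nontrivial result whose proof (KKM lemma or level-set connectedness) you are black-boxing. Either route is acceptable; yours is shorter, Bertsekas' is closer to first principles.
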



\subsection{Subgradient Method}
\label{app:subgradient_method}

Consider the following (constrained) optimization problem
\begin{align}
  \min_{z\in Z} f (z)
  \label{eq:subgradient_problem}
\end{align}
with $\map{f}{\real^N}{\real}$ a convex function and $Z \subseteq \real^N$
a closed, convex set.

A vector $\widetilde{\nabla} f( z ) \in\real^N$ is called a subgradient of the convex
function $f$ at $z\in\real^N$ if
$f(y) \ge f(z) + \widetilde{\nabla} f( z )  (y - z)$ for all $y\in\real^N$.
The (projected) subgradient method is the iterative algorithm
given by
\begin{align}
  z(t+1) = P_{Z} \Big( z(t) - \gamma(t) \widetilde{\nabla} f( z(t) ) \Big)
\label{eq:subgradient_iteration}
\end{align}
where $t\ge 0$ denotes the iteration index, $\gamma(t)$ is the
step-size, $\widetilde{\nabla} f( z(t) )$ denotes a subgradient of $f$ at
$z(t)$, and $P_Z(\cdot)$ is the Euclidean projection onto $Z$.

The following standard assumption is usually needed to guarantee convergence of
the subgradient method.

\begin{assumption}
\label{ass:step-size}
  The sequence $\{ \gamma(t)\}$, with $\gamma(t) \ge 0$ for all $t\ge 0$,
  satisfies the diminishing condition
  \begin{align}\notag
    \lim_{t\to \infty} \gamma(t) = 0, \:\:
    \sum_{t=1}^{\infty} \gamma(t) = \infty, \:\:
    \sum_{t=1}^{\infty} \gamma(t)^2 < \infty. \eqoprocend
  \end{align}
\end{assumption}

The following proposition formally states the convergence of the subgradient method.
\begin{proposition}[{\cite[Proposition 3.2.6]{bertsekas2015convex}}]
  Assume that the subgradients $\widetilde{\nabla} f(z)$ are bounded for all
  $z \in Z$ and the set of optimal solutions is nonempty.  Let the step-size
  $\{\gamma(t)\}$ satisfy the diminishing condition in
  Assumption~\ref{ass:step-size}. Then the subgradient method
  in~\eqref{eq:subgradient_iteration} applied to
  problem~\eqref{eq:subgradient_problem} converges in objective value and
  sequence $\{ z(t)\}$ converges to an optimal solution.~\oprocend
\label{prop:subgradient_convergence}
\end{proposition}

  \bibliographystyle{IEEEtran}
  \bibliography{distributed_min-max}

\begin{thebibliography}{10}
\providecommand{\url}[1]{#1}
\csname url@samestyle\endcsname
\providecommand{\newblock}{\relax}
\providecommand{\bibinfo}[2]{#2}
\providecommand{\BIBentrySTDinterwordspacing}{\spaceskip=0pt\relax}
\providecommand{\BIBentryALTinterwordstretchfactor}{4}
\providecommand{\BIBentryALTinterwordspacing}{\spaceskip=\fontdimen2\font plus
\BIBentryALTinterwordstretchfactor\fontdimen3\font minus
  \fontdimen4\font\relax}
\providecommand{\BIBforeignlanguage}[2]{{%
\expandafter\ifx\csname l@#1\endcsname\relax
\typeout{** WARNING: IEEEtran.bst: No hyphenation pattern has been}%
\typeout{** loaded for the language `#1'. Using the pattern for}%
\typeout{** the default language instead.}%
\else
\language=\csname l@#1\endcsname
\fi
#2}}
\providecommand{\BIBdecl}{\relax}
\BIBdecl

\bibitem{notarnicola2016duality}
I.~Notarnicola, M.~Franceschelli, and G.~Notarstefano, ``A duality-based
  approach for distributed min-max optimization with application to demand side
  management,'' in \emph{IEEE 55th Conference on Decision and Control (CDC)},
  2016.

\bibitem{alizadeh2012demand}
M.~Alizadeh, X.~Li, Z.~Wang, A.~Scaglione, and R.~Melton, ``Demand-side
  management in the smart grid: Information processing for the power switch,''
  \emph{IEEE Signal Processing Magazine}, vol.~29, no.~5, pp. 55--67, 2012.

\bibitem{mohsenian2010autonomous}
A.-H. Mohsenian-Rad, V.~W. Wong, J.~Jatskevich, R.~Schober, and A.~Leon-Garcia,
  ``Autonomous demand-side management based on game-theoretic energy
  consumption scheduling for the future smart grid,'' \emph{IEEE Transactions
  on Smart Grid}, vol.~1, no.~3, pp. 320--331, 2010.

\bibitem{atzeni2013demand}
I.~Atzeni, L.~G. Ord{\'o}{\~n}ez, G.~Scutari, D.~P. Palomar, and J.~R.
  Fonollosa, ``Demand-side management via distributed energy generation and
  storage optimization,'' \emph{IEEE Transactions on Smart Grid}, vol.~4,
  no.~2, pp. 866--876, 2013.

\bibitem{palomar2006tutorial}
D.~P. Palomar and M.~Chiang, ``A tutorial on decomposition methods for network
  utility maximization,'' \emph{IEEE Journal on Selected Areas in
  Communications}, vol.~24, no.~8, pp. 1439--1451, 2006.

\bibitem{yang2010distributed}
B.~Yang and M.~Johansson, ``Distributed optimization and games: A tutorial
  overview,'' in \emph{Networked Control Systems}.\hskip 1em plus 0.5em minus
  0.4em\relax Springer, 2010, pp. 109--148.

\bibitem{zhu2012distributed}
M.~Zhu and S.~Mart{\'\i}nez, ``On distributed convex optimization under
  inequality and equality constraints,'' \emph{IEEE Transactions on Automatic
  Control}, vol.~57, no.~1, pp. 151--164, 2012.

\bibitem{chang2014distributed}
T.-H. Chang, A.~Nedi{\'c}, and A.~Scaglione, ``Distributed constrained
  optimization by consensus-based primal-dual perturbation method,'' \emph{IEEE
  Transactions on Automatic Control}, vol.~59, no.~6, pp. 1524--1538, 2014.

\bibitem{nedic2009approximate}
A.~Nedi{\'c} and A.~Ozdaglar, ``Approximate primal solutions and rate analysis
  for dual subgradient methods,'' \emph{SIAM Journal on Optimization}, vol.~19,
  no.~4, pp. 1757--1780, 2009.

\bibitem{lu2013convergence}
J.~Lu and M.~Johansson, ``Convergence analysis of primal solutions in dual
  first-order methods,'' in \emph{IEEE 52nd Conference on Decision and Control
  (CDC)}, 2013, pp. 6861--6867.

\bibitem{simonetto2016primal}
A.~Simonetto and H.~Jamali-Rad, ``Primal recovery from consensus-based dual
  decomposition for distributed convex optimization,'' \emph{Journal of
  Optimization Theory and Applications}, vol. 168, no.~1, pp. 172--197, 2016.

\bibitem{falsone2016dual}
A.~Falsone, K.~Margellos, S.~Garatti, and M.~Prandini, ``Dual decomposition and
  proximal minimization for multi-agent distributed optimization with coupling
  constraints,'' \emph{arXiv preprint arXiv:1607.00600}, 2016.

\bibitem{nedic2009subgradient}
A.~Nedi{\'c} and A.~Ozdaglar, ``Subgradient methods for saddle-point
  problems,'' \emph{Journal of optimization theory and applications}, vol. 142,
  no.~1, pp. 205--228, 2009.

\bibitem{srivastava2011distributed}
K.~Srivastava, A.~Nedi{\'c}, and D.~Stipanovi{\'c}, ``Distributed min-max
  optimization in networks,'' in \emph{IEEE 17th International Conference on
  Digital Signal Processing (DSP)}, 2011, pp. 1--8.

\bibitem{notarstefano2011distributed}
G.~Notarstefano and F.~Bullo, ``Distributed abstract optimization via
  constraints consensus: Theory and applications,'' \emph{IEEE Transactions on
  Automatic Control}, vol.~56, no.~10, pp. 2247--2261, 2011.

\bibitem{burger2014polyhedral}
M.~B{\"u}rger, G.~Notarstefano, and F.~Allg{\"o}wer, ``A polyhedral
  approximation framework for convex and robust distributed optimization,''
  \emph{IEEE Transactions on Automatic Control}, vol.~59, no.~2, pp. 384--395,
  2014.

\bibitem{mateos2015distributed}
D.~Mateos-N{\'u}{\~n}ez and J.~Cort{\'e}s, ``Distributed subgradient methods
  for saddle-point problems,'' in \emph{IEEE 54th Conference on Decision and
  Control (CDC)}, 2015, pp. 5462--5467.

\bibitem{simonetto2012regularized}
A.~Simonetto, T.~Keviczky, and M.~Johansson, ``A regularized saddle-point
  algorithm for networked optimization with resource allocation constraints,''
  in \emph{IEEE 51st Conference on Decision and Control (CDC)}, 2012, pp.
  7476--7481.

\bibitem{koppel2015regret}
A.~Koppel, F.~Y. Jakubiec, and A.~Ribeiro, ``Regret bounds of a distributed
  saddle point algorithm,'' in \emph{IEEE 40th International Conference on
  Acoustics, Speech and Signal Processing (ICASSP)}, 2015, pp. 2969--2973.

\bibitem{bertsekas1999nonlinear}
D.~P. Bertsekas, \emph{Nonlinear programming}.\hskip 1em plus 0.5em minus
  0.4em\relax Athena scientific, 1999.

\bibitem{Alizadeh2015reduced}
M.~Alizadeh, A.~Scaglione, A.~Applebaum, G.~Kesidis, and K.~Levitt,
  ``Reduced-order load models for large populations of flexible appliances,''
  \emph{IEEE Transactions on Power Systems}, vol.~30, no.~4, pp. 1758--1774,
  2015.

\bibitem{bertsekas2009min}
D.~P. Bertsekas, ``Min common/max crossing duality: A geometric view of
  conjugacy in convex optimization,'' \emph{Lab. for Information and Decision
  Systems, MIT, Tech. Rep. Report LIDS-P-2796}, 2009.

\bibitem{bertsekas2015convex}
------, \emph{Convex Optimization Algorithms}.\hskip 1em plus 0.5em minus
  0.4em\relax Athena Scientific, 2015.

\end{thebibliography}

\begin{IEEEbiography}
  [{\includegraphics[scale=0.16]{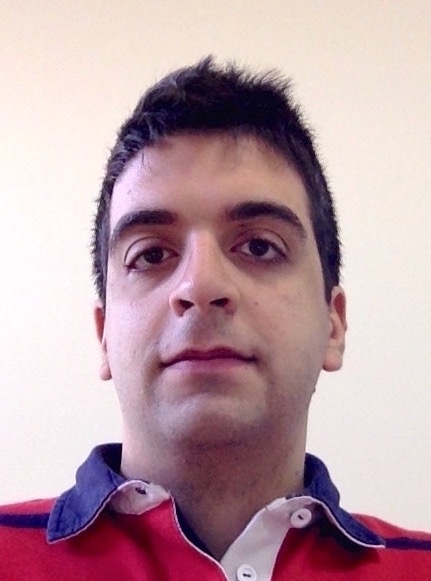}}]
  {Ivano Notarnicola}
  has been a Ph.D. student in Engineering of Complex Systems at the Universit\`a 
  del Salento (Lecce, Italy) since November 2014. He received the Laurea degree 
  ``summa cum laude'' in Computer Engineering from the Universit\`a del Salento 
  in 2014. He was a visiting scholar at the University of Stuttgart from March to June 2014. 
  His research interests include distributed optimization and randomized algorithms.
\end{IEEEbiography}

\begin{IEEEbiography}
[{\includegraphics[width=1in,height=1.25in,clip,keepaspectratio]{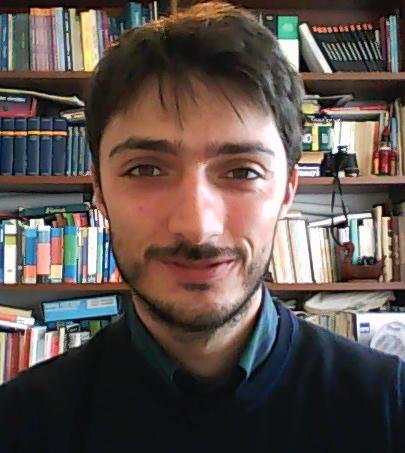}}]
{Mauro Franceschelli}
(M'11) was born in 1983. He received the B.S. and M.S. degree in Electronic Engineering ``cum laude'' in 2005 
and 2007 from the University of Cagliari, Italy. He received the PhD from the same University in 2011 where he 
continued its research collaboration with a postdoc until 2015. He has spent visiting periods at the Royal Institute 
of Technology (KTH), the Georgia Institute of Technology (GaTech), the University of California at Santa Barbara 
(UCSB) and Xidian University, Xi'an, China. Since 2015 he is Assistant Professor (RTD) at the Department of Electrical 
and Electronic Engineering, University of Cagliari, Italy, as recipient of a grant founded by the Italian Ministry of 
Education, University and Research (MIUR) under the 2014 call  ``Scientific Independence of Young Researchers'' (SIR). 
He has published more than 50 papers in top international journals and conference proceedings. His research interests 
include consensus problems, gossip algorithms, multi-agent systems, multi-robot systems, distributed optimization 
and electric demand side management.
\end{IEEEbiography}

\begin{IEEEbiography}
  [{\includegraphics[scale=0.075]{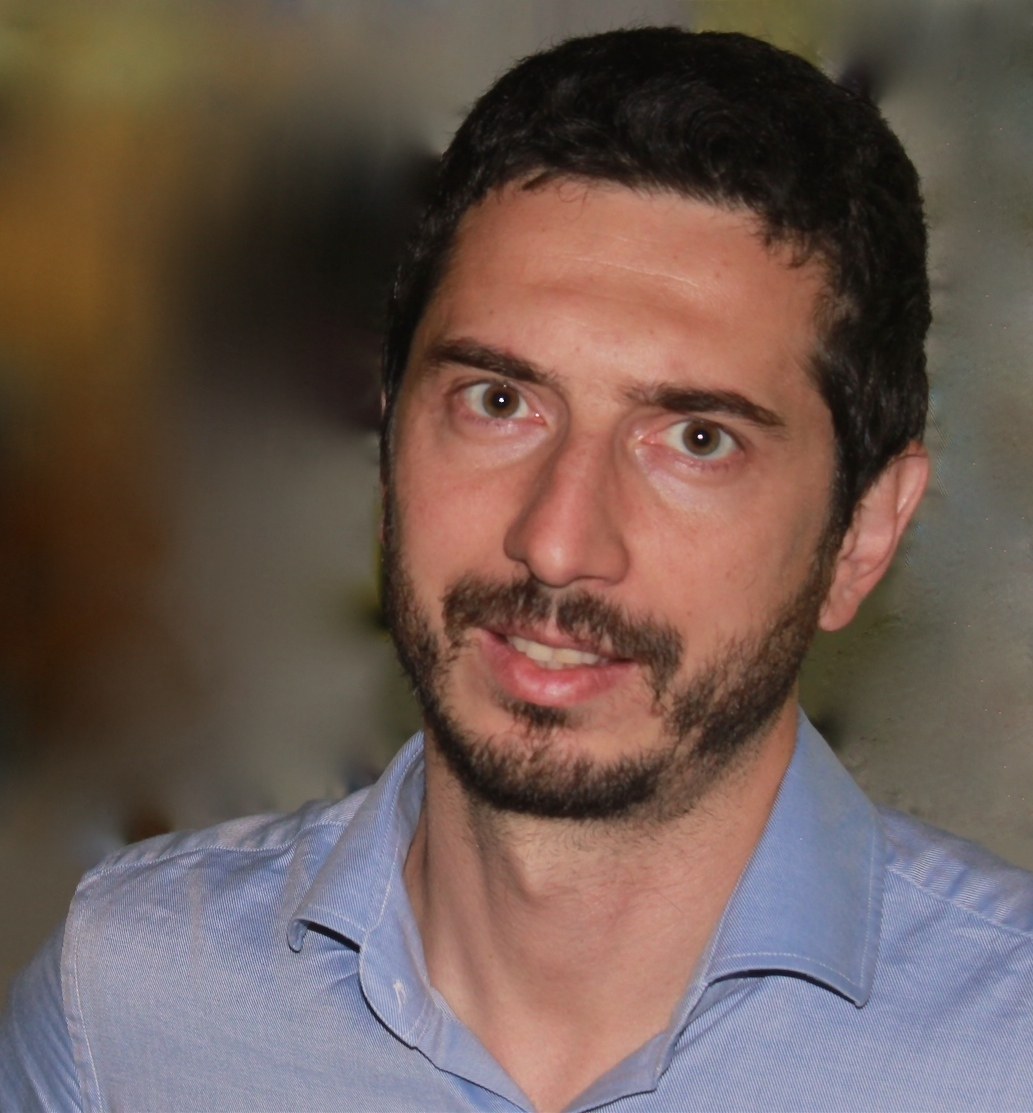}}]
  {Giuseppe Notarstefano}
  is Associate Professor at the Universit\`a del Salento (Lecce, Italy), where he was Assistant Professor 
  (Ricercatore) from February 2007 to May 2016. He received the Laurea degree ``summa cum laude'' 
  in Electronics Engineering from the Universit\`a di Pisa in 2003 and the Ph.D. degree in Automation 
  and Operation Research from the Universit\`a di Padova in April 2007. He has been visiting scholar 
  at the University of Stuttgart, University of California Santa Barbara and University of Colorado Boulder. 
  His research interests include distributed optimization, cooperative control in complex networks, 
  applied nonlinear optimal control, and trajectory optimization and maneuvering of aerial and car vehicles.
  He serves as an Associate Editor in the Conference Editorial Board of the IEEE Control Systems Society 
  and for other IEEE and IFAC conferences. He coordinated the VI-RTUS team winning the International Student 
  Competition Virtual Formula 2012. He is recipient of an ERC Starting Grant 2014.
\end{IEEEbiography}

\end{document}